\newtheorem{theorem}{Theorem}
\newtheorem{definition}{Definition}
\journal{Journal of Mathematical Analysis and Applications}
\begin{document}

\begin{frontmatter}



\title{The Bohr radius and the Hadamard convolution operator}


\author{Khasyanov R.Sh.\fnref{label2}}
\ead{st070255@student.spbu.ru}
\affiliation[label2]{organization={Saint Petersburg State University},
            addressline={Universitetskii prosp., 28D}, 
            city={Saint Petersburg},
            postcode={198504}, 
            country={Russian Federation}}

\begin{abstract}
The concept of the Bohr radius of a pair of operators is introduced. In terms of the convolution function, a general formula for calculating the Bohr radius of the Hadamard convolution type operator with a fixed initial coefficient  is obtained. We apply this formula to the problems of the Bohr radius of the operators of differentiation and integration.  Using the concept of the Bohr radius of a pair of operators, we generalize the theorem of B.Bhowmik and N.Das on the comparison of majorant series of subordinate functions.

\end{abstract}



\begin{keyword}
Bohr radius \sep Hadamard convolution operator  \sep subordinate functions



\end{keyword}

\end{frontmatter}


\section{Introduction}
In 1914, H. Bohr, studying Dirichlet series, noticed \cite{Bohr} the following interesting fact in complex analysis, which is now called the Bohr phenomenon:

\newtheorem*{A}{Theorem A}
\begin{A}
	Let $f(z)=\sum_{n\ge 0}a_nz^n$ and $\|f\|_{\infty}:=\sup_{z\in \mathbb{D}}|f(z)|\le1$ in the unit disk $\mathbb{D}=\{|z|<1\}$. Then
	$$\sum_{n\ge 0}^{} |a_n| r^n\le1, \quad
	0\le r\le 1/3.$$
	The constant $1/3$ is sharp.
\end{A}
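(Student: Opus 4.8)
The plan is to reduce everything to a sharp bound on the Taylor coefficients of a self-map of the disk, after which the asserted inequality becomes an elementary geometric-series estimate.

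First I would establish the key lemma: if $\|f\|_{\infty}\le 1$ then $|a_0|\le 1$ and $|a_n|\le 1-|a_0|^2$ for every $n\ge 1$. For $n=1$ this follows by composing $f$ with a disk automorphism: the function $g=(f-a_0)/(1-\overline{a_0}f)$ maps $\mathbb D$ into $\mathbb D$ with $g(0)=0$, so by the Schwarz lemma $g(z)=z\psi(z)$ with $\|\psi\|_{\infty}\le 1$; solving $f=(a_0+z\psi)/(1+\overline{a_0}z\psi)$ for the linear coefficient gives $a_1=(1-|a_0|^2)\psi(0)$, whence $|a_1|\le 1-|a_0|^2$. To pass to general $n$ I would average over $n$-th roots of unity: with $\omega=e^{2\pi i/n}$, the function $f_n(z)=\tfrac1n\sum_{k=0}^{n-1}f(\omega^k z)$ still satisfies $\|f_n\|_{\infty}\le 1$ and depends only on $z^n$, so $f_n(z)=h(z^n)$ with $\|h\|_{\infty}\le 1$, $h(0)=a_0$, and the linear coefficient of $h$ equal to $a_n$; applying the $n=1$ bound to $h$ yields $|a_n|\le 1-|a_0|^2$.

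With $a:=|a_0|\in[0,1]$ in hand, the main inequality is routine. Summing the geometric series,
$$\sum_{n\ge 0}|a_n|r^n\le a+(1-a^2)\sum_{n\ge 1}r^n=a+(1-a^2)\frac{r}{1-r},$$
so (after dividing by $1-a$, the case $a=1$ forcing $f$ constant and being trivial) the requirement $\sum_{n\ge 0}|a_n|r^n\le 1$ reduces to $(1+a)r/(1-r)\le 1$, i.e.\ $r\le 1/(2+a)$. Since $a\le 1$ gives $1/(2+a)\ge 1/3$, the bound holds for \emph{every} admissible $f$ whenever $0\le r\le 1/3$.

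For sharpness I would test the Möbius automorphism $f(z)=(a-z)/(1-az)$ with $0<a<1$, which has $\|f\|_{\infty}=1$ and coefficients $|a_0|=a$, $|a_n|=a^{n-1}(1-a^2)$ for $n\ge 1$. Then $\sum_{n\ge 0}|a_n|r^n=a+(1-a^2)r/(1-ar)$, and this quantity is strictly increasing in $r$ and equals $1$ exactly at $r=1/(1+2a)$. As $a\to 1^{-}$ this crossing point decreases to $1/3$, so for any prescribed $r>1/3$ one can choose $a$ close enough to $1$ that the majorant series exceeds $1$; hence $1/3$ cannot be improved. The main obstacle is precisely the coefficient lemma $|a_n|\le 1-|a_0|^2$: the sharp factor $1-|a_0|^2$ (rather than the weaker $\sqrt{1-|a_0|^2}$ that Parseval would give) is exactly what makes the constant come out to $1/3$, and it is what forces the combined use of the Schwarz/automorphism argument and the root-of-unity averaging.
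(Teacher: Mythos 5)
Your proposal is correct and follows essentially the same route as the paper: the Schwarz--Pick bound $|a_1|\le 1-|a_0|^2$, extended to all $n$ by averaging $f$ over $n$-th roots of unity, then the geometric-series estimate $|a_0|+(1-|a_0|^2)\frac{r}{1-r}\le 1$ for $r\le 1/3$, with sharpness from the automorphisms $\frac{z-a}{1-\bar a z}$ as $a\to 1$. You merely spell out the details (the reduction $f_n(z)=h(z^n)$ and the explicit crossing point $r=1/(1+2a)$) that the paper leaves as a sketch.
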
 
The Bohr inequality was proved by H. Bohr in 1914 only for $r\le 1/6$, the constant $1/3$ was obtained independently in the same year by M.  Riesz, I. Schur and F. Wiener (for different proofs see e.g. appendix in \cite{Dixon05}).
This theorem can also be proved using the Schwarz-Pick inequality at zero: $|a_1|\le 1-|a_0|^2.$ The method of symmetrization of the analytic function $$g_n(z)=\frac{f(z)+f(e^{\frac{2\pi i}{n}}z)+...+f(e^{\frac{2\pi i(n-1)}{n}}z)}{n},$$
gives us $|a_n|\le 1-|a_0|^2.$ Hence 
$$\sum_{n\ge 0}|a_n|r^n\le |a_0|+(1-|a_0|^2)\frac{r}{1-r}.$$
If $r\le 1/3$  then the last expression is not greater than $1-\frac{(1-|a_0|)^2}{2}<1.$ The Schwarz-Pick inequality turns into the equality only for conformal automorphisms of the disk $$f_a(z)=\frac{z-a}{1-\bar{a}z}, \quad 0\le |a|<1.$$ The  functional family $f_a, \:a\rightarrow 1,$ shows that the constant $1/3$ is the best possible.

Bohr's theorem is equivalent to the inequality known as the Bohr inequality:

$$
\sum_{n\ge0}|a_n|r^n\le \|f\|_\infty, \quad  0\le r\le 1/3.
$$   

An active study of various modifications and generalizations of the Bohr inequality began in the middle of 1990s since P. Dixon, using the Bohr inequality, solved a long-standing problem about the characterization of Banach algebras \cite{Dixon05}. Part of the subsequent research in this area
is directed towards extending the Bohr phenomenon in multidimensional framework and in more
abstract settings (see e.g. \cite{Aizen}, \cite{Boas}, \cite{BoasKhav}, \cite{PPS}). Paulsen and Singh extended Bohr’s inequality
to Banach algebras in \cite{PaSi}. In 2018 B. Bhowmik and N. Das applied the Bohr inequality to the question of comparing majorant series of subordinating functions \cite{BhowDas}. For studies related to the Bohr inequality, see, for example,  \cite{Aizen}-\cite{Dixon05}, \cite{Kayump}-\cite{Ricci}.

\vspace{3mm}
\section{Definitions and notations}
We introduce the concept of the Bohr radius of a pair of operators, in terms of which many well-known results related to the Bohr inequality can be formulated. Let us first recall the definition of the majorant series of analytic function:

\begin{definition}
	
	Let $f(z)=\sum_{n\ge 0}a_nz^n, \: z\in \mathbb{D}.$ The functional $$M_rf:=\sum_{n\ge 0}|a_n|r^n$$ is called the majorant series or the Bohr sum of the function $f.$

\end{definition}

Let $D$ be an open disk centered at zero or an open interval centered at zero. Denote
 $\mathcal{H}(D)$ the set of all functions of the form $f(z)=\sum_{n\ge 0}a_nz^n$ that converge in $D$.
$$\mathcal{H}_m(D):=\{f\in \mathcal{H}(D): \: f(0)=f^{\prime}(0)=...=f^{(m-1)}(0)=0\}.$$

\begin{definition}
	
	Let $m,t,s_1,s_2\ge 0$ and $$T_1:\mathcal{H}_m(t\mathbb{D})\rightarrow \mathcal{H}(\mathbb{D}), \hspace{5mm} T_2: \mathcal{H}_m(-s_1,s_1) \rightarrow \mathcal{H}(-s_2,s_2)$$ be linear operators. We call the maximal $R\in [0,\infty)$, for which

	\begin{equation} \label{def}
		\|T_1f\|_{\infty}\le 1 \Longrightarrow |T_2M_rf|\le 1, \quad 0\le  r\le R, \end{equation}
	the Bohr radius of a pair of $T_1$ and $T_2$ and denote $R_{T_1\rightarrow T_2}.$ If $T_1=T_2,$ we write $R_{T_1}$. 

\end{definition}

As in the case of the classical Bohr inequality, the condition \eqref{def} is equivalent to the inequality
$$|T_2M_rf|\le \|T_1f\|_{\infty}.$$

Denote by $id_m$ the identity operator defined in the space $\mathcal{H}_m(\mathbb{D}).$ The notations $R_{id\to T}$ and $R_{T\to id}$ mean that the operator $id$ is the identity operator of the space on which the operator $T$ is defined. It follows from Bohr's theorem that $R_{id_0}=1/3.$ E. Bombieri in 1962 proved \cite{Bomb} that $R_{id_1}=1/\sqrt{2}.$ The problem of finding $R_{id_m}$ is open. It was discussed in \cite{PPS} and \cite{Bombb}. Note that according to the definition above, the Bohr radius can be greater than one. For example, for the operator $Tf(z)=f(4z)$ the Bohr radius $R_{T\rightarrow id}$ is $4/3.$

\begin{definition}

	The function $$m_{T_1\rightarrow T_2}(r):=\sup_{f: \|T_1f\|_{\infty}\le 1}\dfrac{|T_2M_rf|}{\|T_1f\|_{\infty}}$$
	will be called the Bohr-Bombieri function of a pair $T_1$ and $T_2.$  If the operators $T_1$ and $T_2$ coincide, we write $m_{T_1}(r).$
	
\end{definition}

\textbf{Examples.}

\vspace{2mm}
1. It follows from Bohr's theorem that $m_{id_0}(r)=1, \:0\le r\le 1/3.$ In 1962, E. Bombieri and D.~Ricci proved that for $r\in [1/3, 1/\sqrt{2}],$
$$m_{id_0}(r)=\dfrac{3-\sqrt{8(1-r^2)}}{r}.$$
For $r\in [1/\sqrt{2},1)$, the problem of calculating the Bohr-Bombieri function for the operator $id_0$ remains open and is directly related to the problem of finding $R_{id_m}, \:m\ge 2.$ Profound results related to this problem were obtained by E. Bombieri and J. Bourgain in 2004 (see \cite{Bombb}).

\vspace{3mm}
2. Denote by $\mathcal{C}f(z):=\sum_{n\ge 0}\big(\frac{1}{n+1}\sum_{k= 0}^{n}a_k\big)z^n$ the Cesaro operator. I.~ Kayumov, S. Ponnusamy and D. Khammatova proved \cite{KayumpCesaro} that 
$$m_{id\rightarrow\mathcal{C}}(r)\le \dfrac{1}{r}\log{\dfrac{1}{1-r}}, \quad r\le 0.5335...$$
\begin{definition}
	
	Consider the functions of the form $f(z)=\sum_{n\ge m}c_nz^n.$ Fix the modulus of the initial coefficient $a:=|c_m |$. 
	We call the maximal number $r=|z|,$ at which for all functions of the form $f(z)=\sum_{n\ge m}c_mz^m, \: |c_m|=a,$  the condition (1) holds,
	the Bohr radius of a pair  $T_1$ and $T_2$ with an initial coefficient $a$. Similarly, we define the Bohr-Bombieri function of a pair of operators with an initial coefficient. 
	 Denote, respectively, $$R_{T_1\rightarrow T_2}(a), \hspace{5mm} m_{T_1\rightarrow T_2}(r,a).$$

\end{definition}

For instance  $$R_{id_0}(a)=\dfrac{1}{1+2a}, \quad 1/2<a\le 1.$$
This result was proved by E.~ Bombieri and D.~Ricci in \cite{Bomb}.

\vspace{3mm}
\section{Bohr radius and the Hadamard convolution operator}

\begin{definition}
	Let $f(z)=\sum_{n \ge 0}a_nz^n$ and $h(z)=\sum_{n \ge 0}c_nz^n.$ Operator
	$$A_hf(z)=(h\ast f)(z):=\sum_{n\ge 0}c_na_nz^n$$
	is called the Hadamard convolution operator (see, for example, \cite{Rusch}). We will call the function $h$ the convolution function.
\end{definition}

\vspace{2mm}

\vspace{2mm}
In \cite{ABS} R. Ali, R. Barnard and A. Solynin considered the problem of calculating the Bohr radius for even functions and, as a corollary, obtained the value of the Bohr radius in the problem of the estimating the modulus of an alternating series of analytic function through its uniform norm in the disk. In our terms, their results can be formulated as follows:
$$R_{A_{\frac{1}{1-z^2}}}=R_{id \rightarrow A_{\frac{1}{1+z}}}=1/\sqrt{3}.$$

\vspace{2mm}
I. Kayumov, S. Ponnusamy and D. Khammatova obtained the following general result related to the Bohr radius and the convolution operator:

\newtheorem*{B}{Theorem B (I. Kayumov, S. Ponnusamy and D. Khammatova, \cite{KayumpCesaro2})}
\begin{B}
	Let $\{\phi_k(r)\}_{k=0}^{\infty}$ be a sequence of non-negative and continuous functions in $[0,1)$ such that the series $\sum_{k=0} ^{\infty}\phi_k(r)$ converges locally uniformly with respect to $r\in [0,1).$
	Let $f(z)=\sum_{n\ge 0}a_nz^n,$ $p\in (0,2]$ and
	$$
	\phi_0(r)>\dfrac{2}{p}\sum_{n\ge1}\phi_n(r), \quad r<R,
	$$
	where $R$ is the minimal positive root of the equation
	$$
	\phi_0(x)=\dfrac{2}{p}\sum_{n\ge1}\phi_n(x).$$
	Then the following inequality is true:
	$$|a_0|^p\phi_0(r)+\sum_{n\ge1}|a_n|\phi_n(r)\le \phi_0(r), \quad r\le R.$$
	If $$
	\phi_0(r)<\dfrac{2}{p}\sum_{n\ge1}\phi_n(r) $$ on some interval $(R, R+\varepsilon)$, then the number $R$ is the best possible.
\end{B}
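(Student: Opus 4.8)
The plan is to prove the general inequality for $r \le R$ and then establish sharpness when the reverse inequality holds on some interval to the right of $R$. The core tool will be the classical Schwarz--Pick-type coefficient bound: for $f(z)=\sum_{n\ge 0}a_n z^n$ with $\|f\|_\infty \le 1$, the symmetrization argument recalled in the introduction gives $|a_n| \le 1-|a_0|^2$ for every $n\ge 1$. First I would set $a:=|a_0|\le 1$ and substitute this bound into the left-hand side, reducing the claim to showing

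\begin{equation}\label{reduced}
a^p \phi_0(r) + (1-a^2)\sum_{n\ge 1}\phi_n(r) \le \phi_0(r), \quad 0\le r \le R,
\end{equation}

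uniformly over $a\in[0,1]$. Writing $S(r):=\sum_{n\ge 1}\phi_n(r)$, the inequality \eqref{reduced} rearranges to $(1-a^p)\phi_0(r) \ge (1-a^2)S(r)$, and since the hypothesis gives $\phi_0(r) \ge \tfrac{2}{p} S(r)$ for $r\le R$ (with strict inequality for $r<R$ and equality at the minimal root $R$), it suffices to verify the purely numerical inequality $\tfrac{p}{2}(1-a^p) \ge 1-a^2$ for all $a\in[0,1]$ and $p\in(0,2]$.

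The heart of the matter is therefore this elementary but non-obvious inequality $1-a^2 \le \tfrac{p}{2}(1-a^p)$, which I expect to be the main obstacle in the sense that it is where the specific exponent $p$ and the factor $2/p$ in the hypothesis are used. I would prove it by introducing $g(a):=\tfrac{p}{2}(1-a^p)-(1-a^2)$ on $[0,1]$, noting $g(1)=0$, and differentiating: $g'(a)=2a-\tfrac{p^2}{2}a^{p-1}=2a\bigl(1-\tfrac{p^2}{4}a^{p-2}\bigr)$. Analyzing the sign of $g'$ together with the boundary values $g(0)=\tfrac{p}{2}-1\le 0$ and $g(1)=0$ should yield $g(a)\ge 0$ throughout; the constraint $p\le 2$ enters precisely here, since for $p>2$ the inequality would fail near $a=1$. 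Chaining this with the hypothesis and the coefficient estimate completes the forward inequality, and the local uniform convergence of $\sum_k \phi_k$ guarantees that $S(r)$ is continuous, so that $R$ is well defined as the minimal positive root.

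For the sharpness claim, I would exhibit an extremal function making \eqref{reduced} tight. The natural candidate is a conformal automorphism $f_\alpha(z)=(\alpha - z)/(1-\alpha z)$ with $\alpha\in(0,1)$, for which the Schwarz--Pick bound $|a_n|=1-\alpha^2$ (after symmetrization, or directly since $a_0=\alpha$, $a_n=-(1-\alpha^2)\alpha^{n-1}$) is attained in a limiting sense as the coefficients are pushed to their extreme. Assuming $\phi_0(r)<\tfrac{2}{p}S(r)$ on $(R,R+\varepsilon)$, for $r$ in that interval I would choose $a=\alpha$ close to $1$ so that the difference $\tfrac{p}{2}(1-a^p)\phi_0(r)-(1-a^2)S(r)$ becomes negative — using that $\tfrac{p}{2}(1-a^p)\sim (1-a)$ and $(1-a^2)\sim 2(1-a)$ as $a\to 1$, so the ratio of the two sides tends to the ratio $\phi_0(r)$ versus $\tfrac{2}{p}S(r)$ — thereby violating the target inequality for some admissible $f$ with $\|f\|_\infty\le 1$. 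This shows $R$ cannot be replaced by any larger radius, so $R$ is best possible.
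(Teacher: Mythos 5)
Your overall strategy is the right one (and, since the paper only quotes Theorem~B from \cite{KayumpCesaro2} without proving it, it is essentially the strategy of the original reference): bound $|a_n|\le 1-|a_0|^2$ for $n\ge 1$ by the Wiener/symmetrization argument, reduce to a numerical inequality in $a=|a_0|$, and get sharpness from the automorphisms $\frac{\alpha-z}{1-\alpha z}$ as $\alpha\to 1$. However, there is a concrete error at the heart of the reduction: you inverted the constant. From $(1-a^p)\phi_0(r)\ge (1-a^2)S(r)$ and the hypothesis $\phi_0(r)\ge \frac{2}{p}S(r)$, what you need is
$$\frac{2}{p}\,(1-a^p)\ \ge\ 1-a^2,\qquad a\in[0,1],\ p\in(0,2],$$
not $\frac{p}{2}(1-a^p)\ge 1-a^2$. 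The inequality you wrote is false for every $p<2$: at $p=1$, $a=0$ it reads $\tfrac12\ge 1$. Your own computation already signals this --- you find $g(0)=\tfrac{p}{2}-1\le 0$ and then assert that $g\ge 0$ throughout, which is a contradiction at $a=0$. The corrected inequality does hold: setting $h(a)=\frac{2}{p}(1-a^p)-(1-a^2)$ one has $h(1)=0$, $h(0)=\frac{2}{p}-1\ge 0$, and $h'(a)=2a(1-a^{p-2})\le 0$ on $(0,1)$ for $p\le 2$, so $h$ decreases to $0$ and is nonnegative. The same correction is needed in your sharpness asymptotics: $\frac{p}{2}(1-a^p)\sim\frac{p^2}{2}(1-a)$, not $(1-a)$; with the factor $\frac{2}{p}$ both sides behave like $2(1-a)$ as $a\to 1$, and the limit of $\bigl(a^p\phi_0(r)+(1-a^2)\sum_{n\ge1}a^{n-1}\phi_n(r)-\phi_0(r)\bigr)/(1-a)$ is $2S(r)-p\,\phi_0(r)$, which is positive exactly when $\phi_0(r)<\frac{2}{p}S(r)$; the locally uniform convergence of $\sum_k\phi_k$ justifies passing to this limit term by term. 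With these repairs the proof is complete and coincides with the standard one.
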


In particular, using the last theorem, the authors found the value of the Bohr radius of the identity operator and the convolution operator with
hypergeometric Gaussian function. Let $$F(z)=F(a,b,c,z):=\sum_{n\ge 0}\gamma_nz^n, \quad \gamma_n:=\dfrac{(a)_n(b)_n}{(c)_n(1)_n},$$
where $a,b,c>-1,$ such that $\gamma_n\ge 0,$ $\:\:(a)_n:=a(a+1)...(a+n-1), \: (a)_0=1.$ Then
$R_{id\rightarrow A_F}$ is the minimal positive root of the equation $|F(a,b,c,x)-1|=1/2.$

\vspace{3mm}
\section{The main theorem}

Let $m\ge 0, \:l\in \mathbb{Z},\: m+l\ge 0$ and $S_{m,l}$ be the shift operator on space $\mathcal{H}_m(D)$ namely $S_{m,l}f(z):=z^{l}f(z), \: f\in \mathcal{H}_m(D).$ Let $h(z)=\sum_{n\ge m} c_nz^n.$   We will consider operators of form \begin{equation} \label{Amh} A^{m,l}_{h}f:=S_{m,l}(h \ast f), \quad A^m_h:=A^{m,0}_h, \quad A_h:=A^0_h.
\end{equation} In particular, examples of such operators are differentiation operators on $\mathcal{H}_m(\mathbb{D}):$ $$\partial^mf(z):=\dfrac{d^m}{dz^m}f(z)=m!\cdot S_{m,-m}\Big(\dfrac{z^m}{(1-z)^{m+1}} \ast f(z)\Big)$$
and the integration operator
\begin{equation}\label{int}
\int_0^zf(\zeta)d\zeta=S_{0,1} \Big(\dfrac{-\log(1-z)}{z}\ast f(z)\Big).	
\end{equation}
Let us consider the class $\mathcal{K}$ of convex, univalent analytic functions such that $f(0)=0$ and $f^{\prime}(0)=1.$ We use the notation $\overline{co} \mathcal{K}$ for the closed convex hull of $\mathcal{K}.$
	\begin{theorem}\label{Th:1}
	Let $h_1(z)=\sum_{n\ge m}c_nz^n, \: c_n>0, \: h_2(z)=\sum_{n\ge m}d_nz^n, \:d_n\ge 0$ and assume that there exists a sequence $b_n$ such that $\: b_n^2 = d_{n+m}, \: n\ge 1$ and the function   $\widetilde{h_2}(z)=\sum_{n\ge 1}b_nz^n $  belongs to  $\overline{co}\mathcal{K}.$ If $r\le \inf_{n\ge m+1}\dfrac{c_{n}}{c_{n+1}}$ and $a=|a_m|>r$ then 
	$$m_{id \rightarrow A^{m,l}_{h_1\ast h_2}}(r,a)=r^{m+l}c_md_ma+(1/a-a)a^{-m}r^l((h_1\ast h_2)(ar)-c_md_m(ar)^m).$$
\end{theorem}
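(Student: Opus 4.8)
The plan is to unwind the definitions, isolate an explicit extremal function for the lower bound, and then recognise the remaining upper estimate as a weighted comparison of majorant series of subordinate functions. Writing $f(z)=\sum_{n\ge m}a_nz^n$ with $|a_m|=a$, the Hadamard convolution $h_1\ast h_2$ has coefficients $c_nd_n$, so by the definitions of $A^{m,l}_{h_1\ast h_2}$ and of the majorant series
$$T_2M_rf=\big(A^{m,l}_{h_1\ast h_2}\widetilde f\big)(r)=\sum_{n\ge m}c_nd_n|a_n|r^{n+l},\qquad \widetilde f(z):=\sum_{n\ge m}|a_n|z^n,$$
a nonnegative quantity since $c_n,d_n\ge0$, so the outer modulus may be dropped. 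Factoring $f(z)=z^mg(z)$ with $g(z)=\sum_{k\ge0}g_kz^k$ gives $\|g\|_\infty=\|f\|_\infty\le1$ and $g_0=a_m$, $|g_0|=a$; after a rotation of the variable (which changes neither $\|f\|_\infty$ nor the $|a_n|$) we may take $a_m=a\ge0$. The task is thus to maximise $\sum_{k\ge0}c_{m+k}d_{m+k}\,r^{m+k+l}|g_k|$ over all such $g$.

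For the lower bound I would test the Blaschke factor $f_\ast(z)=z^m\frac{a+z}{1+az}$, which satisfies $\|f_\ast\|_\infty=1$ and $|a_m|=a$. Its coefficients are $g_{\ast,k}=(1-a^2)(-a)^{k-1}$ for $k\ge1$, so $|a_{m+k}|=(1-a^2)a^{k-1}$; substituting and summing the resulting geometric series reproduces exactly the right-hand side of the theorem, upon using $(1/a-a)a^{-m}=(1-a^2)a^{-m-1}$ and $(h_1\ast h_2)(ar)-c_md_m(ar)^m=\sum_{n\ge m+1}c_nd_na^nr^n$. Hence $m_{id\to A^{m,l}_{h_1\ast h_2}}(r,a)$ is at least the claimed value.

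The substance is the matching upper bound. Cancelling the $n=m$ term, which equals $c_md_m a\,r^{m+l}$ for every admissible $g$ because $|g_0|=a$, reduces everything to the weighted coefficient inequality
$$\sum_{k\ge1}c_{m+k}d_{m+k}\,r^{k}|g_k|\ \le\ (1-a^2)\sum_{k\ge1}c_{m+k}d_{m+k}\,r^{k}a^{k-1}.$$
To use that $g$ is bounded I would apply the Schwarz--Pick parametrisation $g=\frac{a+u}{1+au}$ with $u\prec z$ (that is, $u(0)=0$, $\|u\|_\infty\le1$), which rewrites $g=a+(1-a^2)\,\Phi\circ u$ where $\Phi(z)=\frac{z}{1+az}$ is a convex (Möbius) function, an extreme point of $\overline{co}\mathcal K$ with $\big|[z^k]\Phi\big|=a^{k-1}$. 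Then $g_k=(1-a^2)[z^k](\Phi\circ u)$, and the inequality takes the form
$$\sum_{k\ge1}w_k\big|[z^k](\Phi\circ u)\big|\ \le\ \sum_{k\ge1}w_k\big|[z^k]\Phi\big|,\qquad w_k:=c_{m+k}d_{m+k}r^{k}\ge0 .$$
Since $u\prec z$ and $\Phi$ is univalent, $\Phi\circ u$ is subordinate to $\Phi$; the displayed bound is therefore precisely a weighted comparison of the majorant series of a subordinate function against that of its convex superordinate, i.e.\ the promised generalisation of the Bhowmik--Das theorem.

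The main obstacle is establishing this weighted comparison, and it is here that the two standing hypotheses must do the work. The monotonicity $r\le\inf_{n\ge m+1}c_n/c_{n+1}$ makes $c_{m+k}r^k$ nonincreasing in $k$, while $b_k^2=d_{m+k}$ with $\widetilde{h_2}\in\overline{co}\mathcal K$ forces $\sum_{k\ge1}d_{m+k}z^k=\widetilde{h_2}\ast\widetilde{h_2}$ to lie again in $\overline{co}\mathcal K$, the closed convex hull being stable under Hadamard convolution since $\frac{z}{1-xz}\ast\frac{z}{1-yz}=\frac{z}{1-xyz}$; together these should force the weights $w_k$ to decay fast enough that the degree-one factor $\Phi$ dominates every competitor. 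I expect the comparison itself to follow the Bhowmik--Das scheme, representing the subordination through its Schwarz function and estimating coefficients term by term, or equivalently passing to the moment representation $b_k=\int x^{k-1}\,d\mu(x)$; and I anticipate that the remaining hypothesis $a>r$ is exactly what guarantees that $\Phi$, rather than some higher-degree Blaschke product $B$ with $B(0)=a$, is the extremiser, so that the supremum is attained at $f_\ast$ and equals the stated formula.
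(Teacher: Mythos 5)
Your lower bound is fine: you test the same extremal function $z^m\frac{z+a}{1+az}$ as the paper and the geometric-series computation does reproduce the right-hand side. The problem is the upper bound, which you reduce to the weighted comparison
$$\sum_{k\ge1}w_k\big|[z^k](\Phi\circ u)\big|\ \le\ \sum_{k\ge1}w_k\big|[z^k]\Phi\big|,\qquad w_k=c_{m+k}d_{m+k}r^k,$$
and then leave unproved (``I expect\dots'', ``I anticipate\dots''). This is not a routine finishing step; it is the entire content of the theorem, and the route you sketch for it would fail. A term-by-term coefficient estimate is impossible: for $u(z)=z^2$ one has $\big|[z^2](\Phi\circ u)\big|=1>a=\big|[z^2]\Phi\big|$, so subordination does not dominate coefficientwise. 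A global Bhowmik--Das-type argument (comparing majorant series of subordinate functions via the Schwarz function) is also not available at the required range: Theorem E caps such comparisons at $r=1/3$, whereas here the inequality must hold up to $r=\inf_{n\ge m+1}c_n/c_{n+1}$, which equals $1$ already for $h_1=z^m/(1-z)$. Moreover your scheme never actually uses the hypothesis $a>r$, nor the membership $\widetilde{h_2}\in\overline{co}\mathcal{K}$ in the way it is needed (your remark that $\widetilde{h_2}\ast\widetilde{h_2}\in\overline{co}\mathcal{K}$ plays no role in the asserted inequality).

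The paper's mechanism is different in kind and is precisely what closes this gap. It first convolves the subordination $f(z)/z^m\prec\frac{z+a}{1+az}$ with $\widetilde{h_2}$ via Sok\'ol's theorem (this is where $\widetilde{h_2}\in\overline{co}\mathcal{K}$ enters, and where the squares $b_n^2=d_{n+m}$ are generated), then applies Goluzin's theorem --- an $\ell^2$ comparison $\sum\lambda_n|a_n|^2\le\sum\lambda_n|b_n|^2$ valid for \emph{every} non-increasing weight sequence, with no restriction like $r\le1/3$ --- with $\lambda_n=c_{n+m}x^n$ (this is where $r\le\inf c_n/c_{n+1}$ enters). The resulting quadratic coefficient bound is converted back to the linear sum by Cauchy--Bunyakovsky with a splitting parameter $\rho$, and the choice $\rho=\sqrt{r/a}$ that makes both inequalities simultaneously sharp is admissible exactly because $a>r$. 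You would need to supply an argument of comparable strength for your weighted $\ell^1$ comparison; as written, the proposal asserts the conclusion rather than proving it.
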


In the proof of the above theorem, we use the methods of I. Kayumov and S. Ponnusamy (see e.g. \cite{KayumpLacun}). In particular, we use one of Goluzin's theorems on the majorization of subordinate functions. We also use theorem of  J. Sok\'ol \cite{Sok} which is a generalization of the theorem of S.~Ruscheweyh, J. Stankiewicz on the subordination under convex univalent functions \cite{RS}. Let us first formulate the definition of subordinate functions:

\begin{definition}
	We say that the function $f(z)$ is subordinate to the function $g(z)$ and write $f\prec g$ if there exists an analytic function $\omega(z)$ such that $|\omega(z)|\le 1, z\in \mathbb{D},$ $\omega(0)=0$ and
	$$f(z)=g(\omega(z)), \quad z\in \mathbb{D}.$$
	
\end{definition} 

\newtheorem*{C}{Theorem C (G. Goluzin, \cite{Gol})}
\begin{C}
	Let $f(z)=\sum_{n\ge 0}a_nz^n, \: g(z)=\sum_{n\ge 0}b_nz^n$  and  $f\prec g$ in $\mathbb{D}.$ Then for any non-increasing sequence $\lambda_n\ge 0,$
	$$\sum_{n\ge 1}\lambda_n|a_n|^2\le  \sum_{n\ge 1}\lambda_n|b_n|^2.$$
\end{C}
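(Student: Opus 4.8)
The plan is to compute the supremum defining $m_{id\rightarrow A^{m,l}_{h_1\ast h_2}}(r,a)$ directly and to show that it is attained at a disk automorphism. Writing $f(z)=\sum_{n\ge m}a_nz^n$ with $\|f\|_\infty\le 1$ and $|a_m|=a$, and using $c_n,d_n\ge 0$, the quantity $|A^{m,l}_{h_1\ast h_2}M_rf|$ equals $\sum_{n\ge m}c_nd_n|a_n|r^{n+l}$. By the Schwarz lemma I would write $f(z)=z^mg(z)$ with $\|g\|_\infty\le 1$ and $g(0)=a_m$, $|g(0)|=a$; setting $g(z)=\sum_{k\ge 0}g_kz^k$ the functional becomes $r^{m+l}\bigl(c_md_ma+\sum_{k\ge 1}c_{k+m}d_{k+m}r^k|g_k|\bigr)$. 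The first summand already matches the first term of the claimed formula, so everything reduces to the sharp estimate
\begin{equation*}
\sum_{k\ge 1}c_{k+m}d_{k+m}r^k|g_k|\le\Bigl(\tfrac1a-a\Bigr)\sum_{k\ge 1}c_{k+m}d_{k+m}(ar)^k,
\end{equation*}
whose right-hand side equals $(1/a-a)a^{-m}r^{-m}\bigl((h_1\ast h_2)(ar)-c_md_m(ar)^m\bigr)$; reinstating the factor $r^{m+l}$ then reproduces the second term of the theorem.

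The candidate extremal is $f^*(z)=z^m\,(g_0+z)/(1+\bar g_0z)$, i.e. $g^*(z)=(g_0+z)/(1+\bar g_0z)$, a disk automorphism with $g^*(0)=g_0$ whose coefficients satisfy $|g^*_k|=(1-a^2)a^{k-1}$; this is the source of the factor $(1/a-a)(ar)^k$. For an arbitrary competitor $g$ one has $g\prec g^*$, since $(g^*)^{-1}\circ g$ is a Schwarz function, and equivalently $g-g_0\prec g^*-g_0$, where $g^*-g_0=(1-a^2)\,z/(1+\bar g_0z)$ is convex univalent. The key idea is to absorb $d_{k+m}=b_k^2$ into the subordination: since $\widetilde{h_2}\in\overline{co}\mathcal{K}$ and $g^*-g_0$ is convex, Sok\'ol's theorem applied to $g-g_0\prec g^*-g_0$ yields
\begin{equation*}
Q:=\widetilde{h_2}\ast(g-g_0)\prec\widetilde{h_2}\ast(g^*-g_0)=:Q^*,
\end{equation*}
and $Q,Q^*$ have coefficients $b_kg_k$ and $b_kg^*_k$, so that $|b_kg_k|^2=d_{k+m}|g_k|^2$ and the factor $d_{k+m}$ now sits inside a squared coefficient rather than in any weight.

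Now I would apply Goluzin's Theorem C to $Q\prec Q^*$ with the weight $\lambda_k=c_{k+m}(r/a)^k$. Writing $|g_k|=(1-a^2)a^{k-1}t_k$ and $W_k:=c_{k+m}d_{k+m}r^ka^{k-1}$, Goluzin's inequality reduces exactly to $\sum_{k\ge 1}W_kt_k^2\le\sum_{k\ge 1}W_k$, and a single Cauchy--Schwarz step,
\begin{equation*}
\sum_{k\ge 1}W_kt_k\le\Bigl(\sum_{k\ge 1}W_k\Bigr)^{1/2}\Bigl(\sum_{k\ge 1}W_kt_k^2\Bigr)^{1/2}\le\sum_{k\ge 1}W_k,
\end{equation*}
is precisely the desired first-power estimate, with equality when $t_k\equiv 1$, that is, at $f^*$, which also settles sharpness.

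The main obstacle is the legitimacy of this Goluzin step: Theorem C requires the weight $\lambda_k=c_{k+m}(r/a)^k$ to be non-increasing in $k$, and it is exactly here that the two hypotheses $r\le\inf_{n\ge m+1}c_n/c_{n+1}$ and $a>r$ must be combined (for constant $\{c_n\}$ the monotonicity is equivalent to $a>r$). I also need to justify pushing the supremum through the Schwarz reduction, so that bounding moduli of coefficients suffices, and to verify convergence of all series for $r<1$, $a\le 1$; these are routine but should be recorded. The conceptual heart is the convolution trick through Sok\'ol's theorem, which moves $d_{k+m}$ out of the Goluzin weight and leaves the clean, purely $c$-determined weight whose monotonicity is governed by the hypothesis on $\{c_n\}$.
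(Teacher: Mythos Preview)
Your proposal does not prove the stated result. Theorem~C is Goluzin's classical inequality for weighted coefficient sums of subordinate functions; the paper does not prove it but quotes it from \cite{Gol} as a black-box tool. What you have written is instead a proof of Theorem~1 of the paper (the explicit formula for $m_{id\rightarrow A^{m,l}_{h_1\ast h_2}}(r,a)$), a statement that \emph{uses} Theorem~C as one of its ingredients. Nothing in your text establishes the Goluzin inequality itself.

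Read as a proof of Theorem~1, your argument coincides with the paper's. The paper first isolates an $L^2$-type coefficient estimate in a separate Lemma (Sok\'ol's Theorem~D to obtain $\widetilde{h_2}\ast(f/z^m)\prec\widetilde{h_2}\ast\frac{z+a}{1+az}$, then Goluzin's Theorem~C with weight $\lambda_n=c_{n+m}x^n$), and afterwards applies Cauchy--Bunyakovsky with a free splitting parameter $\rho$, optimising to $\rho=\sqrt{r/a}$. You fold the two steps together by taking the Goluzin weight to be $\lambda_k=c_{k+m}(r/a)^k$ from the outset (i.e.\ with the optimal $\rho$ already substituted) and then apply a single Cauchy--Schwarz at the end; the extremal $z^m(z+a)/(1+\bar g_0 z)$ is identical. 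The ``main obstacle'' you flag---monotonicity of $c_{k+m}(r/a)^k$---is exactly the monotonicity hypothesis the paper imposes on its Lemma after setting $x=\rho^2=r/a$. So there is no genuinely different idea, only a difference in expository order.
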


\newtheorem*{D}{Theorem D (J. Sok\'ol, \cite{Sok})}
\begin{D}
Let $F$ and $G$ be in $\overline{co}\mathcal{K}.$ If $f\prec F$ and $g\prec G,$ then $f\ast g\prec F\ast G.$

\end{D}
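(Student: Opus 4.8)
My plan is to realise the supremum by a single Möbius extremal and to meet it from above by coupling the two subordination theorems with a weighted Cauchy--Schwarz inequality. First I would normalise $\|f\|_\infty=1$ and factor $f(z)=z^mg(z)$, so that $g(z)=\sum_{n\ge 0}g_nz^n$ is a Schur function, $\|g\|_\infty=\|f\|_\infty\le 1$, with $g_n=a_{n+m}$ and $|g_0|=|a_m|=a$. For the operators in question the functional unwinds to $|A^{m,l}_{h_1\ast h_2}M_rf|=r^{l}\sum_{n\ge m}c_nd_n|a_n|r^{n}=r^{m+l}c_md_ma+r^{l}\sum_{n\ge 1}c_{n+m}d_{n+m}r^{n+m}|g_n|$, where the first term is frozen by $|g_0|=a$. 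The Möbius map $g(w)=(a+w)/(1+aw)$ has $|g_n|=(1-a^2)a^{n-1}$ for $n\ge 1$, so $f(z)=z^m(a+z)/(1+az)$ is admissible, $\|f\|_\infty=1$, and substituting its coefficients reproduces precisely the claimed right-hand side. This settles the lower bound, and the content of the theorem is the matching upper estimate.

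For the upper bound the pivot is that every admissible $g$ is subordinate to $G(w)=(a+w)/(1+aw)$. Indeed the Schur parametrisation gives $g-a=(1-a^2)\psi/(1+a\psi)$ with $\psi(0)=0$, $|\psi|\le 1$, whence $u:=(g-a)/(1-a^2)\prec U:=w/(1+aw)$; as a Möbius image of the disc, $U$ is convex univalent with $U(0)=0$, $U'(0)=1$, i.e. $U\in\mathcal K\subset\overline{co}\mathcal K$. I would now invoke Theorem D (Sok\'ol) with the pair $u\prec U$ and $\widetilde{h_2}\prec\widetilde{h_2}$, both majorants lying in $\overline{co}\mathcal K$, to get $u\ast\widetilde{h_2}\prec U\ast\widetilde{h_2}$. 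The $n$-th coefficients of the two sides are $g_nb_n/(1-a^2)$ and $(-a)^{n-1}b_n$. Feeding this subordination into Theorem C (Goluzin) with an arbitrary non-increasing $\lambda_n\ge 0$, and using $b_n^2=d_{n+m}$, produces $\sum_{n\ge 1}\lambda_n d_{n+m}|g_n|^2\le(1-a^2)^2\sum_{n\ge 1}\lambda_n d_{n+m}a^{2(n-1)}$. The role of the convolution with $\widetilde{h_2}$ is exactly to let the possibly non-monotone weights $d_{n+m}$ sit symmetrically on both sides, so that only $\lambda_n$ needs to be monotone.

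The last step turns this $\ell^2$ bound into the desired $\ell^1$ bound by Cauchy--Schwarz: $\sum_{n\ge1}c_{n+m}d_{n+m}r^{n+m}|g_n|\le\big(\sum_n c_{n+m}^2r^{2(n+m)}d_{n+m}/\lambda_n\big)^{1/2}\big(\sum_n\lambda_n d_{n+m}|g_n|^2\big)^{1/2}$. Choosing $\lambda_n\propto c_{n+m}r^{n+m}a^{1-n}$ makes both factors equal to $\sum_{n\ge1}c_{n+m}d_{n+m}r^{n+m}a^{n-1}$ once the Goluzin estimate is inserted, so the product collapses to $(1-a^2)\sum_{n\ge1}c_{n+m}d_{n+m}r^{n+m}a^{n-1}$, which is exactly the Möbius value and reassembles into the stated formula. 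This $\lambda_n$ is forced by demanding equality at the extremal $|g_n|=(1-a^2)a^{n-1}$, so the whole proof rests on $\lambda_n$ being admissible in Theorem C, i.e. on $\lambda_n\propto c_{n+m}(r/a)^n$ being non-increasing. I expect this to be the main obstacle: it amounts to $c_{n+m}/c_{n+m+1}\ge r/a$ for every $n\ge1$, and the two hypotheses are tailored precisely to this -- $r\le\inf_{n\ge m+1}c_n/c_{n+1}$ controls the growth of the $c_n$, while $a>r$ supplies the contraction $r/a<1$ that absorbs these ratios. Pinning down this monotonicity, together with the routine checks on convergence of $\sum_{n\ge1}c_{n+m}d_{n+m}r^{n+m}a^{n-1}$ and on the frozen boundary term, is where the real work sits; sharpness needs no extra argument, being witnessed by the Möbius extremal.
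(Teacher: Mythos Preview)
Your proposal does not address Theorem~D at all. Theorem~D is Sok\'ol's subordination--convolution result, quoted from \cite{Sok}; the paper does not prove it but cites it as a tool in the proof of the Lemma and of Theorem~1. What you have written is a proof sketch of Theorem~1 (the formula for $m_{id\to A^{m,l}_{h_1\ast h_2}}(r,a)$), and in the middle of that sketch you explicitly \emph{invoke} Theorem~D as a black box (``I would now invoke Theorem~D (Sok\'ol) with the pair $u\prec U$ and $\widetilde{h_2}\prec\widetilde{h_2}$''). An argument that assumes the statement it is meant to establish is not a proof of that statement.

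If the intended target was Theorem~1, then your outline coincides with the paper's approach: the paper's Lemma packages the subordination $f(z)/z^m\prec(z+a)/(1+az)$, Theorem~D (convolving against $\widetilde{h_2}$), and Goluzin's Theorem~C with weights $\lambda_n=c_{n+m}x^n$ into the quadratic estimate, and the proof of Theorem~1 is then Cauchy--Schwarz with the splitting parameter $\rho=\sqrt{r/a}$, which amounts exactly to your choice $\lambda_n\propto c_{n+m}(r/a)^n$. One small point where your write-up is cleaner: you normalise to $U(w)=w/(1+aw)\in\mathcal K$ before applying Theorem~D, whereas the paper applies it directly with $(z+a)/(1+az)$, which does not vanish at the origin and so does not literally lie in $\overline{co}\mathcal K$; the convolution with $\widetilde{h_2}$ kills the constant term anyway, so the outcome is the same, but your formulation fits the hypotheses of Theorem~D on the nose. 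None of this, however, constitutes a proof of Theorem~D itself.
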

\newtheorem*{L1}{Lemma}
\begin{L1}
	Let $m\ge 0, \:h_1(z)=\sum_{n\ge m}c_nz^n, \:c_n> 0,\: n\ge m+1,$ \: $0\le x\le \inf_{n\ge m+1} \dfrac{c_{n}}{c_{n+1}}$. Let $h_2(z)=\sum_{n\ge m}d_nz^n$ and the sequence $d_n\ge0, \:n\ge m$ satisfies the condition of Theorem 1, $f(z)=\sum_{n\ge m}a_nz^n, \: \|f\|_{\infty}\le 1, \: a=|a_m|.$ Then
	$$\sum_{n\ge m+1}c_nd_n|a_n|^2x^n\le (1/a-a)^2a^{-2m}((h_1\ast h_2)(a^2x)-c_md_m(a^2x)^m).$$ 
\end{L1}

\begin{proof}[Proof.]
	By Schwarz' lemma $\Big|\dfrac{f(z)}{z^m}\Big|\le 1, z\in \mathbb{D},$	hence
	$\dfrac{f(z)}{z^m}\prec \dfrac{z+a}{1+az}.$
	Using Theorem D, we get
	$$\dfrac{f(z)}{z^m}\ast \widetilde{h_2}(z)\prec \dfrac{z+a}{1+az}\ast \widetilde{h_2}(z).$$
	The last expression is equivalent to
	$$\sum_{n\ge 1}b_na_{n+m}z^n\prec \sum_{n\ge 1}b_n (a-1/a)(-az)^n.$$
	Applying the Goluzin theorem to the functions $\dfrac{f(z)}{z^m}\ast \widetilde{h_2}(z), \hspace{3mm} \dfrac{z+a}{1+az}\ast \widetilde{h_2}(z)$ and sequence $\lambda_n=c_{n+m}x^n, n\ge 1,$  that is non-increasing by the condition of the lemma, we get the desired inequality.
\end{proof}

\vspace{3mm}

\begin{proof}[Proof of Theorem 1.] Let us estimate the sum $A_{h_1\ast h_2}^{m,l}M_rf,$ using the Cauchy-Bunyakovsky inequality and the proved lemma:
	\begin{multline}\label{chain}
		A_{h_1\ast h_2}^{m,l}M_rf=\sum_{n\ge m}c_nd_n|a_n|r^{n+l} \\ \le
		 c_md_mar^{m+l}+r^l\Big(\sum_{n\ge m+1}c_nd_n|a_n|^2\rho^{2n}\Big)^{1/2}\Big({\sum_{n\ge m+1}c_nd_n(r/\rho)^{2n}}\Big)^{1/2}  \\  \\
		 \le 
	 c_md_mar^{m+l}+(1/a-a)a^{-m}r^l({(h_1\ast h_2)((a\rho)^2)-c_md_m(a\rho)^{2m}})^{1/2}\cdot \\ \\ \cdot ({(h_1\ast h_2)((r/\rho)^2)-c_md_m(r/\rho)^{2m}})^{1/2}. 
	\end{multline}
	We choose $\rho$ so that the inequalities are sharp. Obviously, the inequality proved in the lemma is sharp if and only if
	$$f(z)=z^m\dfrac{z+a}{1+az}=z^m\Big(a+(a-1/a)\sum_{n\ge 1}(-az)^n\Big).$$
	The applied Cauchy-Bunyakovsky inequality is sharp if
	$$|a_n|\rho ^n=C\Big(\dfrac{r}{\rho}\Big)^n, \quad n\ge m+1,$$
	where $C$ is a constant independent of $n$. Substituting the coefficients of the function $f(z)$ into the last formula, we conclude that $\rho=\sqrt{r/a}$. This imposes the restriction $a> r$. Substituting $\rho=\sqrt{r/a}$ into  \eqref{chain}, we obtain the required formula.
\end{proof}

\vspace{2mm}

\section{Corollaries}

In what follows we will write $h(z)$ instead of $h_1(z)$ and put $h_2(z)=\dfrac{z^m}{1-z}.$ Note that $\widetilde{h_2}(z)=\dfrac{z}{1-z}\in \mathcal{K}.$

1. Consider the problem of calculating $R_{id_0}(a).$ As already noted, for the case $1/2< a \le 1$ this problem was solved by E. Bombieri and D.~Ricci in 1962. Let us show that their result is a special case of our theorem. Notice, that
$$id_0f=h\ast f, \quad h(z)=\dfrac{1}{1-z}.$$
Then Theorem 1 immediately implies that for all $r\le 1$ and $a\in (r,1],$
$$m_{id_0}(r,a)=a+r\cdot \dfrac{1-a^2}{1-ar}.$$
Solving the equation $m_{id_0}(r,a)=1,$ we get $R_{id_0}(a)=\dfrac{1}{1+2a}$ for all $a$ such that $a> R_{id_0}(a),$ i.e. for $a\in (1/2,1].$ 

\vspace{5mm}
2. In \cite{GolDeriv}, G. Goluzin proved that $R_{id_1\rightarrow \partial}=1-\sqrt{2/3}.$ Considering the function $f(z)=z^m$ , it becomes obvious that $R_{id_m\rightarrow \partial^m}=0, \: m\ge 2.$ However, if we divide the differentiation operator by $m!$, then the Bohr radius will always be greater than zero. This fact can be proved using only the Schwarz-Pick inequality, as in the proof of the classical Bohr theorem:

	\begin{theorem}\label{Th:2}
	$$R_{id_m\rightarrow \partial^m/m!}=1-\sqrt[m+1]{2/3}, \quad m\ge 0.$$
\end{theorem}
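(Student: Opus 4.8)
The plan is to follow the classical Schwarz--Pick route that the author indicates. First I would pass to a function on the whole disk: writing $f(z)=\sum_{n\ge m}a_nz^n$ with $\|f\|_\infty\le1$, Schwarz' lemma gives $\bigl|f(z)/z^m\bigr|\le1$, so $\phi(z):=f(z)/z^m=\sum_{k\ge0}b_kz^k$ (with $b_k=a_{k+m}$) satisfies $\|\phi\|_\infty\le1$. Setting $a:=|a_m|=|b_0|$, the symmetrization argument used in the classical Bohr theorem, applied to $\phi$, yields $|b_k|\le1-a^2$ for every $k\ge1$.

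Next I would write out the functional to be estimated. Since $\tfrac{\partial^m}{m!}z^n=\binom{n}{m}z^{n-m}$, the derivative majorant is
\[
\frac{\partial^m}{m!}M_rf=\sum_{k\ge0}|b_k|\binom{k+m}{m}r^k .
\]
Using $\sum_{k\ge0}\binom{k+m}{m}r^k=(1-r)^{-(m+1)}$ together with $|b_0|=a$ and $|b_k|\le1-a^2$, I would bound this by $a+(1-a^2)\bigl((1-r)^{-(m+1)}-1\bigr)$. For $a<1$ the requirement that this not exceed $1$ simplifies, after cancelling the factor $1-a$, to $(1+a)\bigl((1-r)^{-(m+1)}-1\bigr)\le1$, i.e.
\[
r\le 1-\Bigl(\tfrac{1+a}{2+a}\Bigr)^{1/(m+1)}.
\]
The right-hand side is decreasing in $a$ on $[0,1]$, so the constraint is tightest as $a\to1$, where it becomes $r\le 1-(2/3)^{1/(m+1)}$; since for $a=1$ the function $\phi$ is a unimodular constant and the majorant equals $1$, this gives the lower bound $R_{id_m\to\partial^m/m!}\ge 1-\sqrt[m+1]{2/3}=:L$.

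For sharpness I would test the extremal automorphisms $f_a(z)=z^m\frac{a-z}{1-az}$ with $0<a<1$, whose coefficients are $|b_0|=a$ and $|b_k|=a^{k-1}(1-a^2)$. Their derivative majorant equals
\[
a+\frac{1-a^2}{a}\Bigl((1-ar)^{-(m+1)}-1\Bigr),
\]
which is continuous and strictly increasing in $r$, equals $a<1$ at $r=0$, and blows up as $r\to 1/a$; hence it crosses the value $1$ at a unique $R(a)=\tfrac1a\bigl(1-(\tfrac{1+a}{1+2a})^{1/(m+1)}\bigr)$, and a direct computation gives $R(a)\to L$ as $a\to1$. Therefore, given any $r>L$, I can choose $a<1$ close to $1$ so that $R(a)<r$, and then the majorant of $f_a$ exceeds $1$ at that radius. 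This yields $R_{id_m\to\partial^m/m!}\le L$, which combined with the lower bound proves the theorem. As a check, $m=0$ recovers $L=1/3$ and $m=1$ recovers Goluzin's $1-\sqrt{2/3}$.

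I expect the main obstacle to be the sharpness half. The estimate $|b_k|\le1-a^2$ is never attained simultaneously in all coefficients by a single function, so the extremal configuration is genuinely the limit $a\to1$ of automorphisms rather than one saturating function; the argument must therefore verify that the automorphism witnesses push the majorant above $1$ for every $r>L$, which rests precisely on the continuity of $R(a)$ at $1$ and the limit $R(a)\to L$. The algebraic simplifications (the cancellation of $1-a$ and the closed form of the negative-binomial sum) are routine and are what make the two-sided bound collapse to the single value $L$.
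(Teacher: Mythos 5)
Your proposal is correct and follows essentially the same route as the paper: the Schwarz--Pick coefficient bound $|a_n|\le 1-a^2$, the estimate $\partial^m M_rf/m!\le a+(1-a^2)\bigl((1-r)^{-(m+1)}-1\bigr)$ leading to the threshold $1-\sqrt[m+1]{(1+a)/(2+a)}$, and sharpness via the family $z^m\frac{z-a}{1-az}$ as $a\to1$. You merely spell out in more detail the limiting argument for the upper bound, which the paper leaves as a one-line remark.
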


\begin{proof}[Proof.]
	
	We denote $a=|a_m|.$
	Since $\|f\|_{\infty}\le 1$ then $|a_n|\le 1-a^2, \: n\ge 1.$ Let us estimate the sum $\partial^mM_rf/m!:$
	$$\partial^mM_rf/m!=\dfrac{1}{m!}\sum_{n\ge m}n(n-1)...(n-m+1) |a_n|r^{n-m}\le a+\Big(\dfrac{1}{(1-r)^{m+1}}-1\Big)(1-a^2).$$ 
	The last expression is not greater than one if $r\le 1-\sqrt[m+1]{\dfrac{1+a}{2+a}},$ so $R_{id_m\rightarrow \partial^m/m! }\ge 1-\sqrt[m+1]{2/3}.$ The inverse estimate is obtained by considering the family of functions $z^m\dfrac{z-a}{1-az}, \: a\rightarrow 1.$
\end{proof}

\vspace{3mm}
Note that the proof of Theorem 2 implies that $$R_{id_m\rightarrow \partial^m/m!}(a)\ge 1-\sqrt[m+1]{\dfrac{1+a}{2+ a}}$$
for all $a\in [0,1).$ Let us show that using Theorem 1  we can obtain the sharp value of $R_{id_m\rightarrow \partial^m/m!}(a)$ for some $a.$

	\begin{theorem}\label{Th:3}
	If $a^2>1-\sqrt[m+1]{\frac{1+a}{1+2a}}, \: 0<a<1$ then 
	$$R_{id_m\rightarrow \partial^m/m!}(a)=\dfrac{1}{a}\Big(1-\sqrt[m+1]{\dfrac{1+a}{1+2a}}\Big), \quad m\ge 0.$$
\end{theorem}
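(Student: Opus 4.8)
The plan is to realize $\partial^m/m!$ as an operator of the type $A^{m,l}_{h_1\ast h_2}$ covered by Theorem 1, read off the Bohr--Bombieri function with initial coefficient, and then solve the equation $m_{id\to\partial^m/m!}(r,a)=1$; the hypothesis on $a$ will turn out to be exactly the applicability condition of Theorem 1 at the critical radius.

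First I would fix the convolution data as in the Corollaries: take $h_2(z)=\frac{z^m}{1-z}=\sum_{n\ge m}z^n$, so $d_n=1$ and $\widetilde{h_2}(z)=\frac{z}{1-z}\in\mathcal K\subset\overline{co}\mathcal K$. The shifted-convolution representation $\partial^m f/m!=S_{m,-m}\bigl(\frac{z^m}{(1-z)^{m+1}}\ast f\bigr)$ suggests setting $h_1(z)=\frac{z^m}{(1-z)^{m+1}}=\sum_{n\ge m}\binom nm z^n$, so that $c_n=\binom nm>0$ and $(h_1\ast h_2)(z)=h_1(z)$; hence $\partial^m/m!=A^{m,-m}_{h_1\ast h_2}$ and all hypotheses of Theorem 1 are met. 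Since $\frac{c_n}{c_{n+1}}=\frac{n+1-m}{n+1}$ increases in $n$, one has $\inf_{n\ge m+1}\frac{c_n}{c_{n+1}}=\frac{2}{m+2}$.

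Next I would apply Theorem 1 with $l=-m$ and $c_m=d_m=1$. Because $a^{-m}r^{-m}(ar)^m=1$, the general formula collapses to
\[
m_{id\to\partial^m/m!}(r,a)=a+\Bigl(\tfrac1a-a\Bigr)\Bigl(\tfrac1{(1-ar)^{m+1}}-1\Bigr),\qquad r\le\tfrac2{m+2},\ a>r.
\]
Setting the right-hand side equal to $1$, writing $\tfrac1a-a=\tfrac{(1-a)(1+a)}a$ and cancelling the factor $1-a>0$, I obtain $(1-ar)^{m+1}=\frac{1+a}{1+2a}$ and therefore the candidate $r^\ast=\frac1a\bigl(1-\sqrt[m+1]{\frac{1+a}{1+2a}}\bigr)$, which is exactly the asserted value.

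It remains to certify that $r^\ast$ is genuinely the Bohr radius, and this reduces to checking the two applicability conditions of Theorem 1 at $r=r^\ast$. The condition $a>r^\ast$ is, after multiplication by $a$, precisely the hypothesis $a^2>1-\sqrt[m+1]{\frac{1+a}{1+2a}}$. The condition $r^\ast\le\frac2{m+2}$ is the delicate point and, I expect, the main obstacle: setting $\beta=\frac{1+a}{1+2a}$ and using $1-\beta^{1/(m+1)}\le-\frac{\ln\beta}{m+1}$ together with $\frac{m+1}{m+2}\ge\frac12$, it suffices to establish the elementary bound $\frac{1+a}{1+2a}\ge e^{-a}$ on $(0,1)$, i.e. $(1+a)e^a\ge1+2a$, which follows from $\psi(a)=(1+a)e^a-(1+2a)$ satisfying $\psi(0)=\psi'(0)=0$ and $\psi''>0$. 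Once $r^\ast<\frac2{m+2}$ is secured, I would finish by noting that the displayed expression for $m_{id\to\partial^m/m!}(r,a)$ is strictly increasing in $r$ on $[0,\frac2{m+2}]$ and equals $1$ at $r^\ast$; hence the defining implication \eqref{def} holds for every $r\le r^\ast$ and fails just above it, giving $R_{id_m\to\partial^m/m!}(a)=r^\ast$.
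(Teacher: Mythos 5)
Your proposal is correct and follows essentially the same route as the paper: represent $\partial^m f/m!$ as $S_{m,-m}\big(\frac{z^m}{(1-z)^{m+1}}\ast f\big)$, apply Theorem~1 to obtain $m_{id\rightarrow \partial^m/m!}(r,a)=a+(1/a-a)\big(\frac{1}{(1-ar)^{m+1}}-1\big)$ for $r\le \frac{2}{m+2}$ and $a>r$, and solve this equal to $1$, with the hypothesis on $a$ being exactly the condition $a>r(a)$. The only difference is that you supply an explicit elementary argument for $r(a)\le \frac{2}{m+2}$, a step the paper dismisses as ``easily verified.''
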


\begin{proof}[Proof.]
	
	As noted earlier,
	$$\dfrac{\partial^m}{m!}f=S_{m,-m}(h\ast f), \quad h(z)=\dfrac{z^m}{(1-z) ^{m+1}}.$$
	Therefore, Theorem 1 implies that for $r\le \dfrac{2}{2+m}$ and $a>r,$
	$$m_{id\rightarrow \partial^m/m!}(r,a)=a+(1/a-a)\Big(\dfrac{1}{(1-ar)^{m+1}}-1 \Big).$$
	Equating the last expression to one, we obtain $r(a)=\dfrac{1}{a}\Big(1-\sqrt[m+1]{\dfrac{1+a}{1+2a}}\Big).$ The inequality $r(a)\le \dfrac{2}{2+m}, \:m\ge 0,$ is easily verified.
\end{proof}

\vspace{3mm}
3. Let us now consider the problem of estimating the majorant series via the norm of its derivative. The specifics of the problems under consideration lies in the fact that often the solution  is the root of the transcendental equation. In this regard, sometimes the answer is written out in terms of the Lambert function, which is defined as the inverse function to $g(w)=we^w$ and is denoted by $W(x).$ This function is defined for $x\in [-1/e, \infty).$ For positive $x$, the Lambert function is uniquely defined, but for $x\in [-1/e, 0)$ the function $W(x)$ has two values. We choose the branch of the function $W$ for which $W(x)\ge -1.$
	\begin{theorem}\label{Th:4}
	$$0.872664...\le R_{\partial \rightarrow id_1} \le 0.883677... $$
\end{theorem}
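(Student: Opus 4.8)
The plan is to strip the differentiation operator by a change of variable, turning the statement into a Bohr-radius problem for the integration operator, and then to bracket the critical radius by a Cauchy--Bunyakovsky estimate from below and by a single extremal function from above.

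First I would reduce. For $f(z)=\sum_{n\ge1}a_nz^n\in\mathcal{H}_1(\mathbb{D})$ put $g:=\partial f=f'=\sum_{k\ge0}c_kz^k$, so that $c_k=(k+1)a_{k+1}$ and the hypothesis $\|\partial f\|_{\infty}\le1$ is exactly $\|g\|_{\infty}\le1$; as $f$ runs over $\mathcal{H}_1(\mathbb{D})$, $g$ runs over the whole unit ball of $\mathcal{H}(\mathbb{D})$. Since $a_n=c_{n-1}/n$, the majorant series is
\[
id_1M_rf=\sum_{n\ge1}|a_n|r^n=r\sum_{k\ge0}\frac{|c_k|}{k+1}r^k ,
\]
so $R_{\partial\to id_1}$ is the largest $r$ for which this is $\le1$ for every $g$ with $\|g\|_{\infty}\le1$; equivalently it is the Bohr radius $R_{id_0\to\int}$ of the integration operator \eqref{int}. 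Writing $a:=|c_0|$, I will keep two facts: $|c_0|=a\le1$, and the Parseval bound $\sum_{k\ge1}|c_k|^2\le1-a^2$, which is precisely Theorem C applied (with $\lambda_n\equiv1$) to the subordination $g\prec\frac{a+z}{1+az}$ valid for every such $g$.

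For the lower bound I would split off the $k=0$ term and apply Cauchy--Bunyakovsky to the tail:
\[
r\sum_{k\ge0}\frac{|c_k|}{k+1}r^k\le ra+r\Big(\sum_{k\ge1}|c_k|^2\Big)^{1/2}\Big(\sum_{k\ge1}\frac{r^{2k}}{(k+1)^2}\Big)^{1/2}\le ra+r\sqrt{1-a^2}\,\sqrt{Q(r)},
\]
where $Q(r)=\sum_{k\ge1}\frac{r^{2k}}{(k+1)^2}=\frac{\mathrm{Li}_2(r^2)}{r^2}-1$ and $\mathrm{Li}_2(x)=\sum_{n\ge1}x^n/n^2$. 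Maximising $a+\sqrt{1-a^2}\,\sqrt{Q}$ over $a\in[0,1]$ gives the value $\sqrt{1+Q(r)}$ at $a=(1+Q(r))^{-1/2}$, so that for every admissible $g$,
\[
id_1M_rf\le r\sqrt{1+Q(r)}=\sqrt{\mathrm{Li}_2(r^2)} .
\]
The right-hand side is $\le1$ exactly when $\mathrm{Li}_2(r^2)\le1$, i.e. for $r\le\sqrt{\mathrm{Li}_2^{-1}(1)}=0.872664\ldots$, which gives the lower estimate.

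For the upper bound I would test the Schwarz-extremal function $g_a(z)=\frac{a-z}{1-az}$, $0<a<1$, with $\|g_a\|_{\infty}=1$, $c_0=a$, $c_k=-(1-a^2)a^{k-1}$; taking $f=\int_0^zg_a$ gives
\[
id_1M_rf=ar+(1-a^2)\sum_{k\ge1}\frac{a^{k-1}}{k+1}r^{k+1}=\frac{(2a^2-1)r}{a}-\frac{1-a^2}{a^2}\log(1-ar)=:\Psi(r,a).
\]
Since $g_a$ is admissible, $\Psi(r,a)>1$ makes the Bohr inequality fail, so $R_{\partial\to id_1}\le r_a$, where $r_a$ is the root of $\Psi(r,a)=1$; solving $\Psi(r,a)=1$ together with $\partial_a\Psi(r,a)=0$ I expect the minimiser near $a\approx0.81$, yielding $R_{\partial\to id_1}\le0.883677\ldots$. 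The main obstacle, and the reason the conclusion is an interval rather than an equality, is exactly this optimisation: Theorem~1 delivers the exact Bohr--Bombieri function $m_{id_0\to\int}(r,a)$ only in the regime $a>r$ (there $\rho=\sqrt{r/a}\le1$ and $g_a$ is genuinely extremal), whereas the worst parameter above has $a<r$. For $a<r$ the true supremum over admissible $g$ is only sandwiched between the Cauchy--Bunyakovsky bound $\sqrt{\mathrm{Li}_2(r^2)}$ and the Möbius value $\Psi(r,a)$, and identifying the extremal function in this range is the genuinely hard part; the remaining steps are the numerical inversion of $\mathrm{Li}_2(r^2)=1$ and of the transcendental system for $\Psi$, for which the Lambert $W$ function is convenient.
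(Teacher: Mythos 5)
Your proposal is correct and follows essentially the same route as the paper: a Cauchy--Bunyakovsky estimate yielding the bound $\sqrt{\mathrm{Li}_2(r^2)}$ for the lower radius, and the antiderivative of the M\"obius map $\frac{z-a}{1-az}$, minimised over $a\approx 0.8123$, for the upper one. The only (cosmetic) difference is that you split off the constant term of $f'$ and optimise over $a=|c_0|$ before applying Cauchy--Bunyakovsky, whereas the paper applies it in one stroke via $\|f'\|_2\le\|f'\|_\infty$; both give the same bound $\sqrt{\mathrm{Li}_2(r^2)}$.
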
 

\begin{proof}[Proof.]
	Let us prove the lower bound for $R_{\partial \rightarrow id_1}$. Let $\|f^{\prime}\|_{\infty}\le 1,$ then
	\begin{multline*}
		\sum_{n\ge 1}|a_n|r^n \le \Big(\sum_{n\ge 1}n^2|a_n|^2\Big)^{1/2}\Big(\sum_{n\ge 1}\dfrac{r^{2n}}{n^2}\Big)^{1/2} =\|f^{\prime}\|_2(Li_2(r^2))^{1/2}\le \|f^{\prime}\|_{\infty}(Li_2(r^2))^{1/2}\le (Li_2(r^2))^{1/2},
	\end{multline*}
	where $Li_2(x)=\sum_{n\ge 1}\dfrac{x^n}{n^2}$ is a polylogarithmic function. $Li_2(r^2)=1$ for $r=0.872664...$, and thus the lower bound is proved.
	
	\vspace{2mm}
	Let us now prove the upper bound for $R_{\partial \rightarrow id_1}$. To do this, consider the function $f(z)=\sum_{n\ge 1}a_nz^n$:
	$$f(z)=\int_{0}^{z}\dfrac{\zeta-a}{1-a\zeta}d\zeta,\quad 0<a<1.$$
	It is obvious that $\|f^{\prime}\|_{\infty}\le 1$. We calculate $M_rf:$
	$$M_rf=ar+\dfrac{a^2-1}{a^2}(\log{(1-ar)}+ar).$$ Equating the last expression to one, we get the following solution:
	$$r(a)=\dfrac{1}{a}\Big(1+\dfrac{a^2-1}{2a^2-1}W\Big(\dfrac{1-2a^2}{ a^2-1}e^{-1}\Big)\Big),$$
	where $W(x) $ is the Lambert function. 
	$$\min_{0<a<1}r(a)=r(0.812308...)=0.883677...$$ hence $R_{\partial \rightarrow id_1}\le 0.883677....$
\end{proof}

\vspace{4mm}
	\begin{theorem}\label{Th:5}
	Let	 $a\in (0.892643..., 1].$ Then
	$$R_{\partial\rightarrow id_1}(a)=\dfrac{1}{a}\Big(1+\dfrac{a^2-1}{2a^2-1}W\Big(\dfrac{1-2a^2}{a^2-1}e^{-1}\Big)\Big).$$
\end{theorem}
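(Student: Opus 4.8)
The plan is to eliminate the derivative from the hypothesis by passing to $g=f'$, thereby recasting the problem in the $T_1=id$ form covered by Theorem~1. For $f(z)=\sum_{n\ge1}a_nz^n\in\mathcal{H}_1(\mathbb{D})$ put $g=f'=\sum_{k\ge0}b_kz^k$ with $b_k=(k+1)a_{k+1}$. Integration is a bijection of $\mathcal{H}_1(\mathbb{D})$ onto $\mathcal{H}_0(\mathbb{D})$ under which $\|f'\|_\infty=\|g\|_\infty$ and $|a_1|=|b_0|=a$. Since $a_n=b_{n-1}/n$, the representation \eqref{int} gives
\[
M_rf=\sum_{n\ge1}|a_n|r^n=\sum_{k\ge0}\frac{|b_k|}{k+1}r^{k+1}=A^{0,1}_hM_rg,\qquad h(z)=\frac{-\log(1-z)}{z}.
\]
Taking the supremum over the two families, which correspond under the bijection, yields $m_{\partial\to id_1}(r,a)=m_{id\to A^{0,1}_h}(r,a)$.

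Next I would evaluate the right-hand side with Theorem~1. I write $h=h_1\ast h_2$ with $h_1(z)=h(z)=\sum_{k\ge0}\frac{z^k}{k+1}$ and $h_2(z)=\frac{1}{1-z}$, so that $m=0$, $l=1$, $c_k=\frac1{k+1}>0$, $d_k=1\ge0$, and $\widetilde{h_2}(z)=\frac{z}{1-z}\in\mathcal{K}$; all hypotheses of Theorem~1 hold. Because $\inf_{k\ge1}c_k/c_{k+1}=\inf_{k\ge1}\frac{k+2}{k+1}=1$, the formula is valid for every $r\le1$ with $a>r$. Substituting $c_0=d_0=1$ and $(h_1\ast h_2)(ar)=h(ar)=\frac{-\log(1-ar)}{ar}$ into the conclusion of Theorem~1 gives
\[
m_{\partial\to id_1}(r,a)=ar+\frac{a^2-1}{a^2}\big(\log(1-ar)+ar\big),
\]
which is precisely the value $M_rf$ computed in the proof of Theorem~4 for $f(z)=\int_0^z\frac{\zeta-a}{1-a\zeta}\,d\zeta$. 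Thus that $f$ is extremal and the displayed expression is the sharp Bohr-Bombieri function.

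Finally I would solve $m_{\partial\to id_1}(r,a)=1$. Denoting the right-hand side by $F(r)$, one has $F'(r)=a+\frac{(1-a^2)r}{1-ar}>0$ on $(0,1/a)$, so $F$ is strictly increasing and $F(r)=1$ has a unique root, which --- exactly as in the proof of Theorem~4 --- equals
\[
r(a)=\frac1a\Big(1+\frac{a^2-1}{2a^2-1}W\big(\tfrac{1-2a^2}{a^2-1}e^{-1}\big)\Big),
\]
and $F(r)\le1$ precisely for $r\le r(a)$. Hence $R_{\partial\to id_1}(a)=r(a)$, but only while Theorem~1 is applicable up to $r=r(a)$, i.e. while $a>r(a)$; determining this range of $a$ is the crux. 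Since $F$ is increasing, $a>r(a)$ is equivalent to $F(a)>F(r(a))=1$, that is to
\[
a^2+\frac{a^2-1}{a^2}\big(\log(1-a^2)+a^2\big)>1 .
\]
The left-hand side equals $1$ at $a=0.892643\ldots$ and (in the limit) at $a=1$, and is strictly greater than $1$ in between, so the inequality holds exactly on $(0.892643\ldots,1)$; the endpoint $a=1$ is included directly via the degenerate extremal $f(z)=-z$, where $R_{\partial\to id_1}(1)=1=r(1)$. I expect this last verification --- identifying $0.892643\ldots$ as the relevant root of $F(a)=1$ and controlling the sign of $F(a)-1$ up to $a=1$ --- to be the only delicate point; the reduction and the direct appeal to Theorem~1 are routine.
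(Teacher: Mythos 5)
Your proposal is correct and follows essentially the same route as the paper: reduce $R_{\partial\to id_1}(a)$ to the integration operator $S_{0,1}\big(\frac{-\log(1-z)}{z}\ast\,\cdot\,\big)$ acting on $f'$, apply Theorem~1 (with $h_1=-\log(1-z)/z$, $h_2=1/(1-z)$) to obtain $m(r,a)=ar+\frac{a^2-1}{a^2}(\log(1-ar)+ar)$, and solve $m=1$ via the Lambert function. The only divergence is in justifying the admissible range of $a$: the paper locates where $a\ge r(a)$ by inspecting the plots of $r=r(a)$ and $r=a$ and solving $a=r(a)$ in closed form as $a=\big(1+\tfrac12 W(-2/e^2)\big)^{1/2}=0.892643\ldots$, whereas you convert $a>r(a)$ into $F(a)>1$ using the monotonicity of $F$ in $r$ --- a slightly more careful argument that reduces to the same threshold equation $1-a^2=e^{-2a^2}$.
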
 

\begin{proof}[Proof.]
	
	Note that $R_{\partial\rightarrow id_0}=R_{id_0\rightarrow \int},$ where $\int$ is the operator defined by \eqref{int}. Recall that
	$$\int_0^z f(z)dz=S_{0,1} (h\ast f)(z), \quad h(z)=-\dfrac{\log(1-z)}{z}. $$
	Therefore, by Theorem 1, for $r<1$ and $a>r,$
	$$m_{id_0\rightarrow \int}(r,a)=ar+(1/a-a)r\Big(\dfrac{-\log(1-ar)}{ar}-1\Big).$$
	We equate the last expression to one and get 
	$$r(a)=\dfrac{1}{a}\Big(1+\dfrac{a^2-1}{2a^2-1}W\Big(\dfrac{1-2a^2}{a^2-1}e^{-1}\Big)\Big).$$
	
	Let us find at what point $r(a)$ under the condition $a\ge r(a)$ reaches its minimum. To do this, we draw  plots of functions $r=r(a)$ and $r=a$:

	\begin{figure}[h!]
		\centerline{\includegraphics[width=0.5\linewidth, height=7cm]{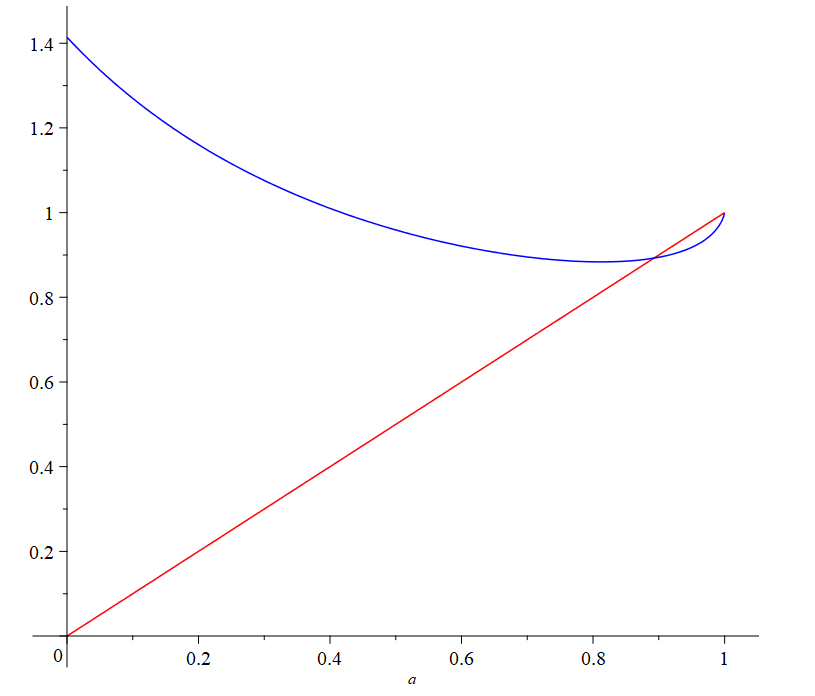}}  
	\end{figure}

	It can be seen  that under the condition $a \ge r(a)$ the function $r(a)$ takes the minimum value at the point $a$, such that $r(a)=a$. Solving the equation $a=r(a)$, we find that the method used is only suitable for $a>\Big({1+\dfrac{1}{2}W\Big(\dfrac{-2}{e^2 }\Big)}\Big)^{1/2}=0.892643...$
\end{proof}

\vspace{2mm}
4. R. Ali, R. Barnard and A. Solynin considered \cite{ABS} the problem of calculating the Bohr radius for the functions of the form $\sum_{n\ge 0}a_{nm}z^{nm}.$ In our terms their result can be written as follows: $R_{id \rightarrow A_{\frac{1}{1-z^m}}}=\dfrac{1}{\sqrt[m]{3}}.$
In \cite{KayumpLacun} I. Kayumov and S. Ponnusamy solved the same problem for functions of the form $z^l\sum_{n\ge 0}a_{nm}z^{nm}.$ In particular, from their proof one can obtain the values of $R_{id \rightarrow A_{\frac{1}{1-z^m}}}(a)$ for some $a\in [0,1).$ Let us show that these values can be calculated, using Theorem 1. Note that
$$R_{id \rightarrow A_{\frac{1}{1-z^m}}}=R_{id\rightarrow A^{m+1,-m-1}_{\frac{z^{m+1}}{1-z^m}}}.$$
Let 
$$h_1(z)=\dfrac{z^{m+1}}{1-z} \hspace{5mm}\text{and} \hspace{5mm} h_2(z)=\dfrac{z^{m+1}}{1-z^m}.$$
Then $$\widetilde{h_2}(z)=\dfrac{z}{1-z^m}=\dfrac{1}{m}\sum_{n=0}^{m-1}\dfrac{z}{1-e^{\frac{2\pi i n}{m}}z}\in {co}\mathcal{K}.$$ 
So we can use Theorem 1:
$$m_{id\rightarrow A^{m+1,-m-1}_{\frac{z^{m+1}}{1-z^m}}}(r,a)=a+(1/a-a)\dfrac{(ar)^m}{1-(ar)^m}.$$
Therefore $$R_{id \rightarrow A_{\frac{1}{1-z^m}}}(a)=\dfrac{1}{a}\sqrt[m]{\dfrac{a}{1+2a}}$$
for $a\in [0,1)$ such that $a^{2m-1}>\dfrac{1}{1+2a}.$

\vspace{3mm}
\section{The Bohr radius and the convergence radius of the convolution function}
It is easy to prove the following curious proposition, which we will use in what follows:

	\begin{theorem}\label{Th:6}
	Let $R_c(h)$ be the radius of convergence of $h$. Then 	\begin{equation}\label{conv}
		R_{A^{m,l}_h\rightarrow id} \cdot R_c(h)\le 1.	
	\end{equation}
\end{theorem}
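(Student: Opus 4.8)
The plan is to test the defining Bohr condition for $R:=R_{A^{m,l}_h\to id}$ against monomials and to convert the resulting coefficient bound into a statement about $R_c(h)$ via the Cauchy--Hadamard formula $1/R_c(h)=\limsup_{n\to\infty}|c_n|^{1/n}$. Recall that $R$ is the largest number for which $\|A^{m,l}_h f\|_\infty\le 1$ forces $|id\,M_r f|\le 1$ on $0\le r\le R$, and that for $T_2=id$ one has $|id\,M_rf|=M_rf=\sum_{n\ge m}|a_n|r^n$ for $f\in\mathcal{H}_m$.

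First I would compute the two sides for the test functions $f=z^n$, $n\ge m$. Since $A^{m,l}_h z^n=S_{m,l}(h\ast z^n)=c_nz^{n+l}$ and $n+l\ge m+l\ge 0$, the exponent is nonnegative, so $\|A^{m,l}_h z^n\|_\infty=|c_n|$, while on the other side $M_r z^n=r^n$. Hence, whenever $c_n\ne 0$, the normalized function $z^n/|c_n|$ satisfies $\|A^{m,l}_h(z^n/|c_n|)\|_\infty=1$, and the defining implication yields $r^n/|c_n|\le 1$ for every $r\le R$. Letting $r\to R$ gives the elementary estimate $R\le |c_n|^{1/n}$ for each index $n\ge m$ with $c_n\ne 0$.

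It remains to pass from these per-coefficient bounds to $R\le 1/R_c(h)$. If all $c_n$ (for $n\ge m$) are nonzero, then $R\le\inf_{n\ge m}|c_n|^{1/n}\le\limsup_{n\to\infty}|c_n|^{1/n}=1/R_c(h)$, which is exactly \eqref{conv}. If instead $c_N=0$ for some $N\ge m$ (in particular whenever $h$ is a polynomial), then $A^{m,l}_h(Kz^N)=0$ for every constant $K$, so $\|A^{m,l}_h(Kz^N)\|_\infty=0\le 1$ while $M_r(Kz^N)=Kr^N$ can be made larger than $1$ at any fixed $r>0$ by choosing $K$ large; thus the Bohr condition fails at every positive radius, $R=0$, and \eqref{conv} holds trivially.

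The argument is short, and the only place that calls for care is the last paragraph's bookkeeping: one must handle the vanishing coefficients (and, relatedly, the case $R_c(h)=\infty$, where $|c_n|^{1/n}\to 0$ again forces $R=0$) separately from the generic case, so that the passage from $R\le|c_n|^{1/n}$ to the bound $R\le 1/R_c(h)$ is legitimately justified. I expect that mild case distinction to be the main, and essentially only, obstacle.
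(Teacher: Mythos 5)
Your proposal is correct and follows essentially the same route as the paper: the paper likewise uses the normalized monomials $z^{k}/c_{k}$ as test functions, obtains $R_{A^{m,l}_h\rightarrow id}\le |c_{k}|^{1/k}$, and passes to the limit along a subsequence realizing $\limsup_k|c_k|^{1/k}=1/R_c(h)$. Your extra case analysis for vanishing coefficients and $R_c(h)=\infty$ is sound bookkeeping that the paper leaves implicit.
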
 

\begin{proof}[Proof.]
	Let ${k_s}$ be a sequence of natural numbers for which ${\sqrt[k_s]{|c_{k_s}}|}$ has the largest limit of all partial limits of the sequence ${\sqrt[k]{|c_k|}}$. Let us define a sequence of functions as follows:
	$$f_{k_s}(z)=\dfrac{z^{k_s}}{c_{k_s}}.$$
	Then $A^{m,l}_hf_{k_s}(z)=z^{k_s+l},$ hence $\|A^{m,l}_hf_{k_s}\|_{\infty}=1,$ but $M_rf_{k_s}= \dfrac{r^{k_s}}{|c_{k_s}|}.$ The last expression is greater than one if $r>\sqrt[k_s]{|c_{k_s}|},$ so
	$$R_{A^{m,l}_h\rightarrow id}\le\sqrt[k_s]{|c_{k_s}|}.$$
	Letting $s$ tend to infinity, we obtain the inequality $R_{A^{m,l}_h\rightarrow id}\le R^{-1}_c(h).$ 
\end{proof}

\vspace{3mm}
Using Theorem 6, we prove the following propositions:
	\begin{theorem}\label{Th:7}

	1. $R_{S_{m,-m} \rightarrow id} \rightarrow 1, \: m\rightarrow \infty;$
	\hspace{5mm}
	2. $R_{\partial^m \rightarrow id} \rightarrow 1, \: m\rightarrow \infty.$
\end{theorem}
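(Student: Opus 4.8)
The plan is to establish each limit by squeezing the Bohr radius between the universal upper bound $1$ supplied by Theorem 6 and an explicit lower bound that I will show tends to $1$. In both cases I would first recognize the operator as a Hadamard convolution type operator whose convolution function has convergence radius exactly $1$, which yields the upper estimate at once. Since $h\ast f=f$ for $h(z)=\sum_{n\ge m}z^n=\frac{z^m}{1-z}$, the shift is $S_{m,-m}=A^{m,-m}_h$; and since the derivative formula of Section 4 together with the linearity of the convolution gives $\partial^m=A^{m,-m}_{\widehat h}$ with $\widehat h(z)=m!\,\frac{z^m}{(1-z)^{m+1}}=\sum_{n\ge m}\frac{n!}{(n-m)!}z^n$, both convolution functions have coefficients growing at most polynomially, so $R_c(h)=R_c(\widehat h)=1$. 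Theorem 6 then gives $R_{S_{m,-m}\to id}\le 1$ and $R_{\partial^m\to id}\le 1$.

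For the lower bounds I would pass to the function $g$ on which the boundedness hypothesis is actually imposed and apply the Cauchy--Bunyakovsky inequality together with $\|g\|_2\le\|g\|_\infty$. For the shift, writing $g(z)=z^{-m}f(z)=\sum_{k\ge0}b_kz^k$, the hypothesis $\|S_{m,-m}f\|_\infty\le1$ is exactly $\|g\|_\infty\le1$, and since $M_rf=r^mM_rg$ one obtains
$$M_rf=r^m\sum_{k\ge0}|b_k|r^k\le r^m\Big(\sum_{k\ge0}|b_k|^2\Big)^{1/2}\Big(\sum_{k\ge0}r^{2k}\Big)^{1/2}\le\frac{r^m}{\sqrt{1-r^2}}.$$
For the derivative, writing $g=\partial^mf=\sum_{k\ge0}b_kz^k$ so that $a_{k+m}=\frac{k!}{(k+m)!}b_k$, the same inequality combined with the decaying weights $\frac{k!}{(k+m)!}\le\frac{1}{m!}$ gives
$$M_rf=r^m\sum_{k\ge0}\frac{k!}{(k+m)!}|b_k|r^k\le\frac{r^m}{m!\sqrt{1-r^2}}.$$

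Consequently $R_{S_{m,-m}\to id}\ge r_m$ and $R_{\partial^m\to id}\ge\rho_m$, where $r_m,\rho_m\in(0,1)$ are the roots of $\frac{r^m}{\sqrt{1-r^2}}=1$ and $\frac{r^m}{m!\sqrt{1-r^2}}=1$; these roots are unique and well defined because the right-hand sides increase continuously from $0$ to $+\infty$ on $(0,1)$. It then remains to check that $r_m,\rho_m\to1$. From the defining relations one reads off $1-r_m^2=r_m^{2m}$ and $1-\rho_m^2=\rho_m^{2m}/(m!)^2\le 1/(m!)^2$: the latter tends to $0$ directly, while for the former a one-line contradiction argument (a subsequence bounded away from $1$ would force $r_m^{2m}\to0$, hence $1-r_m^2\to0$ and $r_m\to1$) gives the same conclusion. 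Combining each lower bound with its upper bound $\le1$ and invoking the squeeze theorem finishes both parts.

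I expect the only genuinely delicate point to be confirming that these crude Cauchy--Bunyakovsky bounds are still strong enough: although $\|g\|_2\le\|g\|_\infty$ discards a great deal, the prefactor $r^m$ (respectively $r^m/m!$) drives the threshold toward $1$ as $m\to\infty$, so the asymptotics survive. The remaining ingredients—the two convolution identities, the radius-of-convergence computations, and the monotonicity used to define $r_m$ and $\rho_m$—are routine and I would treat them briefly.
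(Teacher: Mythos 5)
Your proposal is correct and takes essentially the same route as the paper: a Cauchy--Bunyakovsky estimate combined with $\|\cdot\|_2\le\|\cdot\|_\infty$ yields an explicit lower threshold (root of an equation of the form $r^{2m}+r^2=1$) tending to $1$, which is squeezed against the upper bound $1$ coming from Theorem 6 (or from the trivial observation $R\le 1$). The only difference is in part 2, where the paper shortcuts by noting $R_{S_{m,-m}\rightarrow id}\le R_{\partial^m\rightarrow id}$ and reducing to part 1, whereas you redo the Cauchy--Bunyakovsky estimate directly for $\partial^m$ using the weights $k!/(k+m)!\le 1/m!$; both arguments are valid.
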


\newtheorem*{C1}{Corollary}
\begin{C1}
	The number 1 in the  inequality \eqref{conv} can not be improved.
\end{C1}

\begin{proof}[Proof.]

	1. Let us find the lower estimate for $R_{S_{m,-m}\rightarrow id}.$ Note that $R_{S_{m,-m}\rightarrow id}=R_{id\rightarrow S_{m,m}}$ Suppose that $\|f\|_{\infty}\le 1.$ Then
$$	r^mM_rf=r^m\sum_{n\ge m}|a_n|r^n\le r^m \Big({\sum_{n\ge m}|a_n|^2}\Big)^{1/2}\Big(\sum_{n\ge m}r^{2n}\Big)^{1/2} =r^{2m}\dfrac{\|f\|_2}{\sqrt{1-r^2}}\le r^{2m} \dfrac{\|f\|_\infty}{\sqrt{1-r^2}}.$$
	The last expression is not greater than one if $r\le r_m$, where $r_m$ is the solution of the equation $r^{4m}+r^2=1.$ Obviously, $r_m\rightarrow 1, \:m\rightarrow\infty.$ It is clear that $R_{S_{m,-m}\rightarrow id}\le 1,$ therefore $R_{S_{m,-m}\rightarrow id} \rightarrow 1, \:m\rightarrow \infty.$
	
	\vspace{2mm}
	
	2. Obviously, $ R_{S_{m,-m}\rightarrow id} \le R_{\partial^m \rightarrow id},$ therefore the second statement of the Theorem follows from the first statement and also from Theorem 6.
\end{proof}

\vspace{1mm}
\section{Bohr radius and subordination of functions}
In \cite{BhowDas} B. Bhowmik and N. Das applied the Bohr theorem to questions on comparison of majorant series of subordinate functions. The definition of subordinate functions was given in section 4. Let us formulate the definition of quasisubordinate functions:

\newtheorem*{04}{Definition (M. Robertson, \cite{Rob})}
\begin{04}
	We say that the function $f(z)$ is quasisubordinate to the function $g(z)$ and write $f\prec_q g$ if there exists an analytic function $\omega(z)$ such that $|\omega(z)|\le 1, z\in \mathbb{D},$ $\omega(0)=0$ and
	$$|f(z)|\le |g(\omega(z))|, \quad z\in \mathbb{D}.$$
	
\end{04} 

The following theorem was proved in \cite{BhowDas} by B. Bhowmik and N. Das for the case of subordinate functions and generalized by I. Kayumov, S. Ponnusamy and S. Alkhaleefah  \cite{AKP} to the case of quasisubordinate functions:

\newtheorem*{E}{Theorem E}
\begin{E}
	Let  $f \prec_q g$ in $\mathbb{D}$. Then 
	$$M_rf \le M_rg, \quad 0\le r\le 1/3.$$
	The number $1/3$ is the best possible.
\end{E}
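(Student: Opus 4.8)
The plan is to peel the quasisubordination apart into an outer bounded factor and an inner genuine subordination, and then to hit each piece with Bohr's theorem (Theorem A). First I would use the standard fact that $f\prec_q g$ is equivalent to a factorization $f(z)=\varphi(z)\,g(\omega(z))$, where $\omega$ is a Schwarz function ($\omega(0)=0$, $|\omega|\le 1$) and $\varphi$ is analytic with $\|\varphi\|_\infty\le 1$: the quotient $\varphi:=f/(g\circ\omega)$ is analytic because every zero of $g\circ\omega$ is a zero of $f$ of at least the same order, and the bound $|\varphi|\le 1$ is exactly the quasisubordination inequality. Writing $G:=g\circ\omega=\sum_n G_nz^n$ and $\varphi=\sum_n c_nz^n$, the Cauchy product gives $|a_k|\le\sum_j|c_j|\,|G_{k-j}|$, and summing against $r^k$ yields the submultiplicativity of the majorant series,
$$M_rf\le M_r\varphi\cdot M_rG.$$
Since $\|\varphi\|_\infty\le 1$, Theorem A gives $M_r\varphi\le 1$ for $0\le r\le 1/3$, so it remains to establish $M_rG\le M_rg$ on the same range, i.e. the pure subordination case $G\prec g$.

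For the subordination case I would expand $G(z)=g(\omega(z))=\sum_{n\ge 0}b_n\,\omega(z)^n$, so that $G_k=\sum_{n\le k}b_n[\omega^n]_k$; since all terms are non-negative, interchanging the summations gives
$$M_rG\le\sum_{n\ge 0}|b_n|\,M_r(\omega^n).$$
The whole theorem then collapses to the single estimate $M_r(\omega^n)\le r^n$ for $0\le r\le 1/3$. This is where Bohr's theorem enters a second time: by the Schwarz lemma $|\omega(z)|\le|z|$, so $\psi:=\omega/z$ satisfies $\|\psi\|_\infty\le 1$, hence $\|\psi^n\|_\infty\le 1$. As $\omega^n=z^n\psi^n$, I get $M_r(\omega^n)=r^nM_r(\psi^n)$, and Theorem A applied to $\psi^n$ gives $M_r(\psi^n)\le 1$ for $r\le 1/3$. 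Combining, $M_r(\omega^n)\le r^n$, whence $M_rG\le\sum_n|b_n|r^n=M_rg$, and finally $M_rf\le M_rg$ for $0\le r\le 1/3$.

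Sharpness I would obtain by specializing to $g\equiv 1$: then $f\prec_q g$ means precisely $\|f\|_\infty\le 1$ and $M_rg=1$, so the assertion $M_rf\le M_rg$ reduces to Bohr's inequality $M_rf\le 1$, whose constant $1/3$ is already sharp (via the disk automorphisms $f_a$ as $a\to 1$). Hence no $r>1/3$ can work for the general quasisubordination statement.

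The main obstacle, and the only delicate point, is to convince oneself that the two crude triangle-inequality steps—the submultiplicativity of $M_r$ and the coefficientwise expansion of $g\circ\omega$—do not destroy the sharp constant. The resolution is that all the loss is quarantined inside two applications of Bohr's theorem to honestly bounded functions, $\varphi$ and $\psi^n$, each of sup-norm at most $1$, while the decisive factor $r^n$ is recovered exactly from the $z^n$ supplied by the Schwarz lemma. Consequently the constant $1/3$ propagates unchanged, and it is worth noting that no finer subordination machinery (Goluzin's Theorem C or Sok\'ol's Theorem D) is needed for this soft comparison—those tools are required only for the sharp, coefficient-dependent conclusions of Theorem~\ref{Th:1}.
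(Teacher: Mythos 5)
Your proof is correct, and it is essentially the standard argument: the paper itself states Theorem E without proof (citing \cite{BhowDas} and \cite{AKP}), but your two key steps --- the factorization $f=\varphi\cdot(g\circ\omega)$ with $M_r\varphi\le 1$ by Bohr, and the Schwarz-lemma estimate $M_r(\omega^n)=r^nM_r((\omega/z)^n)\le r^n$ for $r\le 1/3$ --- are exactly the ingredients the paper deploys in its proofs of the generalizations, Theorems 8 and 9 (specialized to $T=id_0$), and your sharpness example via $g\equiv 1$ reducing to the classical Bohr inequality is also the standard one. No issues to report.
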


\vspace{1mm}
We generalize Theorem E for the cases of subordination and majorization.

	\begin{theorem}\label{Th:8}
	Let $m\ge 0, \: f(z)=\sum_{n\ge m}a_nz^n,\: g(z)=\sum_{n\ge m}b_nz^n$ and operator $T=A_h^{m,-m}$ defined by \eqref{Amh},  has the property $|c_n|\le |c_{n+1}|, \: n\ge m.$
	If $Tf \prec Tg$ in $\mathbb{D}$ then 
	$$M_rf\le M_r(Tg), \quad r\le R_{T\rightarrow id}.$$
\end{theorem}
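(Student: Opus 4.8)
The plan is to adapt the Bhowmik--Das argument behind Theorem E, with the classical Bohr estimate replaced by the defining inequality of $R_{T\rightarrow id}$, used in its homogeneous form: for every $\phi$ in the domain of $T$,
\begin{equation}
	M_r\phi\le\|T\phi\|_{\infty},\qquad r\le R_{T\rightarrow id}. \tag{$\star$}
\end{equation}
Write $Tf=\sum_{p\ge 0}F_pz^p$ and $Tg=\sum_{k\ge 0}G_kz^k$, so that $F_p=c_{p+m}a_{p+m}$, $G_k=c_{k+m}b_{k+m}$, and $M_r(Tg)=\sum_{k\ge 0}|G_k|r^k$. By hypothesis $Tf\prec Tg$, i.e. $Tf=(Tg)\circ\omega$ for a Schwarz function $\omega$; I assume $c_n\ne 0$ (implicit here, as in Theorem 1), so that each $a_{p+m}$ is recoverable from $F_p$.

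First I would read off the coefficient relations. Since $\omega(0)=0$, comparing constant terms gives $F_0=G_0$, hence $a_m=b_m$; comparing the coefficient of $z^p$ in $Tf=\sum_{k\ge 0}G_k\omega^k$ gives, for $p\ge 1$,
$$|a_{p+m}|\le\frac{1}{|c_{p+m}|}\sum_{k\ge 1}|G_k|\,\big|[z^p]\omega^k\big|.$$
Multiplying by $r^{p+m}$ and interchanging the non-negative sums, the tail of $M_rf$ is dominated by $\sum_{k\ge 1}|G_k|\,I_k$ with $I_k:=\sum_{p\ge 1}\big|[z^p]\omega^k\big|\,|c_{p+m}|^{-1}r^{p+m}$. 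Thus everything reduces to two estimates valid for $r\le R_{T\rightarrow id}$: the leading bound $|a_m|r^m\le|G_0|$ and the modewise bound $I_k\le r^k$ for each $k\ge 1$; summing them yields $M_rf\le|G_0|+\sum_{k\ge 1}|G_k|r^k=M_r(Tg)$.

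The leading term is immediate: applying $(\star)$ to $\phi(z)=z^m$, for which $T\phi\equiv c_m$, gives $R_{T\rightarrow id}^{\,m}\le|c_m|$, so $|a_m|r^m\le|c_m|\,|a_m|=|c_m|\,|b_m|=|G_0|$. The heart of the matter is $I_k\le r^k$, where I would exploit the vanishing order of $\omega^k$. Writing $\omega(z)=z\sigma(z)$, the Schwarz lemma gives $\|\sigma\|_{\infty}\le 1$, hence $\|\sigma^k\|_{\infty}\le 1$ and $[z^p]\omega^k=[z^{p-k}]\sigma^k$; after the substitution $j=p-k$,
$$I_k=r^k\sum_{j\ge 0}\frac{\big|[z^j]\sigma^k\big|}{|c_{j+k+m}|}\,r^{j+m}=:r^kJ_k.$$
Now let $\zeta_k$ be the unique element of the domain of $T$ with $T\zeta_k=\sigma^k$; then $\|T\zeta_k\|_{\infty}\le 1$, so $(\star)$ yields $M_r\zeta_k=\sum_{j\ge 0}\big|[z^j]\sigma^k\big|\,|c_{j+m}|^{-1}r^{j+m}\le 1$. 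Since $|c_n|\le|c_{n+1}|$ forces $|c_{j+k+m}|\ge|c_{j+m}|$, a termwise comparison gives $J_k\le M_r\zeta_k\le 1$, i.e. $I_k\le r^k$.

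I expect the real obstacle to be the production of the decaying factor $r^k$ in the modewise bound. The naive chain $M_rf\le\|Tf\|_{\infty}\le\|Tg\|_{\infty}$ is useless, since $\|Tg\|_{\infty}$ need not lie below $M_r(Tg)$; one must genuinely use that $\omega^k$ vanishes to order $k$. The subtle point is that extracting this vanishing shifts the convolution indices by $k$, and it is exactly the monotonicity $|c_n|\le|c_{n+1}|$ that makes the shifted Bohr sum $J_k$ dominated by the unshifted sum $M_r\zeta_k$ to which $(\star)$ applies. A minor technical check I would also dispatch is that each preimage $\zeta_k$ (and the test function $z^m$) lies in the domain $\mathcal{H}_m(t\mathbb{D})$ of $T$, which holds because $\sigma^k$ is bounded and $|c_n|$ is non-decreasing and nonzero.
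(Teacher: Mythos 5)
Your proposal is correct and follows essentially the same route as the paper: both apply the Schwarz lemma to $\omega^k(z)/z^k$, feed its preimage under $T$ into the Bohr inequality $M_r(\,\cdot\,)\le \|T(\,\cdot\,)\|_\infty$ valid for $r\le R_{T\rightarrow id}$ (with the domain issue settled by Theorem 6, i.e. $R_{T\rightarrow id}\le R_c^{-1}$), and use the monotonicity $|c_n|\le|c_{n+1}|$ to dominate the shifted denominators $|c_{j+k+m}|$ by $|c_{j+m}|$. Your mode-by-mode bookkeeping via $I_k$ and $J_k$ is only a reorganization of the paper's double-sum manipulation.
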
 

\vspace{1mm}
	\begin{theorem}\label{Th:9}
	Under the conditions of Theorem 8, if $|Tf(z)| \le |Tg(z)|,$ $z\in \mathbb{D},$ then
	$$M_rf\le M_r(Tg), \quad r\le R_{T\rightarrow id}.$$
	The number $r=R_{T\rightarrow id}$ in the last inequality is best  possible.
	
\end{theorem}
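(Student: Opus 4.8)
The plan is to follow the majorization half of Theorem E — the case $T=id$, where the governing radius is exactly Bohr's $1/3$ — but to replace the classical Bohr inequality by the \emph{Bohr inequality for the pair} $(T,id)$, namely the reformulation $M_ru\le\|Tu\|_\infty$ for $r\le R_{T\rightarrow id}$ of the defining property of $R_{T\rightarrow id}$. Write $F:=Tf=\sum_{k\ge0}F_kz^k$ and $G:=Tg=\sum_{k\ge0}G_kz^k$, so that $F_k=c_{k+m}a_{k+m}$ and $G_k=c_{k+m}b_{k+m}$. First I would record the factorization behind majorization: the hypothesis $|Tf(z)|\le|Tg(z)|$ on $\mathbb D$ forces every zero of $G$ to be a zero of $F$ of at least the same order, so $\phi:=F/G$ extends to a function analytic on $\mathbb D$ with $\|\phi\|_\infty\le1$ (the degenerate case $G\equiv0$ gives $f\equiv0$ and is trivial). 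Thus $Tf=\phi\cdot Tg$, and expanding $\phi(z)=\sum_{j\ge0}\phi_jz^j$ yields the coefficient identity $F_k=\sum_{j=0}^{k}\phi_jG_{k-j}$, i.e. $c_{k+m}a_{k+m}=\sum_{j=0}^{k}\phi_j\,c_{k-j+m}b_{k-j+m}$.

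The heart of the argument is to turn this into a product bound whose $g$-factor is exactly $M_r(Tg)$. Beginning with $M_rf=\sum_{k\ge0}|a_{k+m}|r^{k+m}=\sum_{k\ge0}\frac{|F_k|}{|c_{k+m}|}r^{k+m}$, applying the triangle inequality and reindexing by $i=k-j$, I reach $M_rf\le\sum_{j\ge0}|\phi_j|\sum_{i\ge0}\frac{r^{\,i+j+m}}{|c_{i+j+m}|}|G_i|$. Here the monotonicity hypothesis $|c_n|\le|c_{n+1}|$ is used decisively: since $i\ge0$ it gives $|c_{i+j+m}|\ge|c_{j+m}|$, so the double sum separates as $M_rf\le\big(\sum_{j\ge0}\frac{|\phi_j|}{|c_{j+m}|}r^{j+m}\big)\sum_{i\ge0}|G_i|r^{i}=\big(\sum_{j\ge0}\frac{|\phi_j|}{|c_{j+m}|}r^{j+m}\big)M_r(Tg)$. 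I would then identify the prefactor as $M_rq$, where $q(z):=\sum_{j\ge0}\frac{\phi_j}{c_{j+m}}z^{j+m}\in\mathcal H_m(\mathbb D)$ is the unique solution of $Tq=\phi$. Since $\|Tq\|_\infty=\|\phi\|_\infty\le1$, the Bohr inequality for $(T,id)$ gives $M_rq\le1$ whenever $r\le R_{T\rightarrow id}$, and hence $M_rf\le M_r(Tg)$ on that range.

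For sharpness I would read off the extremal pair directly from the definition of $R_{T\rightarrow id}$. Take $g(z)=z^m/c_m$, so that $Tg\equiv1$ and $M_r(Tg)=1$. For any $r>R_{T\rightarrow id}$, maximality in the definition of $R_{T\rightarrow id}$ supplies some $f\in\mathcal H_m(\mathbb D)$ with $\|Tf\|_\infty\le1$ but $M_rf>1$; as $|Tf(z)|\le1=|Tg(z)|$ the majorization hypothesis is satisfied while $M_rf>1=M_r(Tg)$, so the inequality cannot hold beyond $R_{T\rightarrow id}$.

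I expect the one genuinely delicate point to be producing exactly $M_r(Tg)$ — not the weaker $M_rg$ nor the coarser $\|Tg\|_\infty$ — on the right-hand side: discarding the constants $c_n$ the ``wrong way'' collapses the estimate to $M_r\phi\cdot M_rg$ and only recovers the Bohr radius $1/3$, whereas the separation $M_rf\le M_rq\cdot M_r(Tg)$ with $Tq=\phi$ is precisely what makes the full radius $R_{T\rightarrow id}$ available. That this separation is legitimate is exactly the content of the hypothesis $|c_n|\le|c_{n+1}|$, so I would take care to apply the monotonicity in the direction $|c_{i+j+m}|\ge|c_{j+m}|$ and to verify that $q$ indeed lies in $\mathcal H_m(\mathbb D)$ (its coefficients are bounded by $|\phi_j|/|c_m|$, so the series converges in $\mathbb D$).
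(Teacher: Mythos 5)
Your proof is correct and follows essentially the same route as the paper's: factor $Tf=\phi\cdot Tg$ with $\|\phi\|_\infty\le 1$, expand the coefficient convolution, use the monotonicity $|c_n|\le|c_{n+1}|$ to replace $|c_{i+j+m}|$ by $|c_{j+m}|$ in the denominators, bound the resulting factor $M_r(T^{-1}\phi)$ by $1$ via the Bohr inequality for the pair $(T,id)$, and exhibit sharpness with $g(z)=z^m/c_m$ and an extremal function (or sequence) for the Bohr problem of $(T,id)$. The only cosmetic difference is that you name the intermediate quantity $M_rq$ and factor the double sum as a product, where the paper bounds the inner sum by $1$ in place; the substance is identical.
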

\vspace{1mm}
\begin{proof}[Proof of Theorem  8]
	Let $R_c$ be the radius of convergence of the power series $\sum_{n\ge m} c_nz^n.$ Since $|c_n|\le |c_{n+1}|,$ then $R_c\le 1. $ Note that
	$$T: \mathcal{H}_m(R_c^{-1}\mathbb{D}) \rightarrow \mathcal{H}(\mathbb{D}).$$
	Let $T^{-1}$ be the inverse operator for $T$, i.e.
	$$T^{-1}\Big(\sum_{n\ge 0}a_nz^n\Big)=\sum_{n\ge 0}\dfrac{a_n}{c_{n+m}}z^{ n+m}.$$
	Since $Tf\prec Tg$, there exists a function $\omega(z)$ analytic in $\mathbb{D}$ such that $|\omega(z)|\le 1, z\in \mathbb{D }, \:\omega(0)=0$ and
	$$Tf(z)=(Tg)(\omega(z)), \quad z\in \mathbb{D}.$$

	It follows from the Schwartz lemma that
	$$\Big|\dfrac{\omega^n(z)}{z^n}\Big|=\Big|T\circ T^{-1}\Big(\dfrac{\omega^n(z) }{z^n}\Big)\Big|\le 1, \quad z\in \mathbb{D}.$$
	That is why
	$$M_r\Big(T^{-1}\dfrac{\omega^n(z)}{z^n}\Big)\le 1, \quad r\le \min\{R_{T\rightarrow id }, R_c^{-1}\}.$$
	
	\vspace{3mm}
	It follows from Theorem 6 that $\min\{R_{T\rightarrow id}, R_c^{-1}\}=R_{T\rightarrow id}.$
	Let $\omega^n(z)=\sum_{l\ge n}\alpha_l^{(n)}z^{l}.$ Thus,
	$$T^{-1}\dfrac{\omega^n(z)}{z^n}=T^{-1}\sum_{l\ge n}\alpha_l^{(n)}z^{ l-n}=\sum_{l\ge n}\dfrac{\alpha_l^{(n)}}{c_{l-n+m}}z^{l-n+m}, \quad z\in R_{T\rightarrow id}\mathbb{D}.$$
	Replacing $n$ with $n-m$,
	\begin{equation} \label{substit}
		\sum_{l\ge n-m}\Big|\dfrac{\alpha_l^{(n-m)}}{c_{l-n+2m}}\Big|r^{l+m}\le r^{n-m} , \quad r\le R_{T\rightarrow id}.
	\end{equation}
	Consider the function $Tf(z)=(Tg)(\omega(z)):$
	$$
	(Tg)(\omega(z))=\sum_{n\ge m}c_nb_n\omega^{n-m}(z) =\sum_{n\ge m}c_nb_n\sum_{l\ge n-m}\alpha_l^ {(n-m)}z^l.
	$$
	Hence
	$$
	f(z)=T^{-1}Tf(z)=T^{-1}((Tg)(\omega(z))) =\sum_{n\ge m}c_nb_n\sum_{l\ge n-m}\alpha_l^{(n-m)}\dfrac{z^{l+m}}{c_{l+m}}.
	$$
	
	Finally, we use the condition $|c_n|\le |c_{n+1}| $ and inequality \eqref{substit}:
	\begin{multline*}
		M_rf\le  \sum_{n\ge m}|c_nb_n|\sum_{l\ge n-m}|\alpha_l^{(n-m)}|\dfrac{r^{l+m}}{|c_{l+m}|}\le \\ \le  \sum_{n\ge m}|c_nb_n|\sum_{l\ge n-m}|\alpha_l^{(n-m)}|\dfrac{r^{l+m}}{|c_{l-n+2m}|}\le\sum_{n\ge m}|c_nb_n|r^{n-m}=M_r(Tg).  
	\end{multline*}
\end{proof} 

\vspace{1mm}
\begin{proof}[Proof of Theorem 9]
	
	Since $|Tf(z)|\le |Tg(z)|, \: z\in \mathbb{D}$, there exists a function $\phi$ analytic in $\mathbb{D}$ such that $ |\phi(z)|\le 1, \:z\in \mathbb{D},$ and $$Tf(z)=\phi(z)Tg(z).$$
	Let $\phi(z)=\sum_{k\ge 0} \phi_kz^k.$ Then
	$$|\phi(z)|=|T\circ T^{-1}(\phi(z))|\le 1, \quad z\in \mathbb{D}.$$
	Therefore
	$$M_rT^{-1}\phi=\sum_{n\ge 0} \Big|\dfrac{\phi_n}{c_{n+m}}\Big|r^{n+m}=\sum_{ n\ge s} \Big|\dfrac{\phi_{n-s}}{c_{n-s+m}}\Big|r^{n-s+m}\le 1$$ for all $r\le \min\{R_{T\rightarrow id}, R_c^{-1}\}=R_{T\rightarrow id}.$
	Then
	$$Tf(z)=\sum_{n\ge m}c_na_nz^{n-m}=\sum_{k\ge 0}\sum_{n\ge m}\phi_kc_nb_nz^{k+n-m}=\sum_{n \ge 0}\sum_{s=m}^{m+n}\phi_{n-s+m}c_sb_sz^n.$$
	Let us apply the operator $T^{-1}$ to the obtained expression:
	$$f(z)=\sum_{n\ge m}a_nz^n=\sum_{n\ge 0}\sum_{s=m}^{m+n}\dfrac{\phi_{n-s+ m}c_sb_s}{c_{n+m}}z^{n+m}=\sum_{n\ge m}\sum_{s=m}^{n}\dfrac{\phi_{n-s}c_sb_s}{ c_{n}}z^{n}.$$
	Equating the coefficients on the right and left sides of the last equality, we obtain
	$$a_n=\sum_{s=m}^{n}\dfrac{\phi_{n-s}c_sb_s}{c_{n}}.$$
	Thus,
	\begin{multline*}
		M_rf=\sum_{n\ge m}|a_n|r^n\le \sum_{n\ge m}\sum_{s=m}^{n}\Big|\dfrac{\phi_{n-s}c_sb_s}{c_{n}}\Big|r^n\le\\ \le\sum_{s\ge m}\sum_{n\ge s}\Big|\dfrac{\phi_{n-s}}{c_{n-s+m}}\Big|r^{n-s+m}|c_sb_s|r^{s-m}\le \sum_{s\ge m}|c_sb_s|r^{s-m}=M_rTg.
	\end{multline*}

	The sharpness of $r=R_{T\rightarrow id}$ is obvious if we consider $g(z)=\dfrac{z^m}{c_m}$ and $f(z)$ and take as f an extremal function for the corresponding Bohr's problem (if there is no extremal function, we need to take an extremal sequence of functions). 
\end{proof}
\vspace{3mm}
\textbf{Acknowledgments.} The author thanks A.D. Baranov for his attention to the work on the article, B.N. Khabibullin for the question on the relation between subordination and the general concept of the Bohr radius, and V.V.~Shemyakov for a useful remark in the Bohr problem with a derivative.

The results of Sections 4 and 5 were obtained with the support of  the Russian Science Foundation projects № 23-11-00153. The results of Sections 6 and 7 were obtained with the support of  the Foundation for the Development of Theoretical Physics and Mathematics "BASIS".






\end{document}